\definecolor{red}{RGB}{255,25,25}
\definecolor{blue}{RGB}{25,50,200}
\newtheorem{theorem}{Theorem}[section]
\crefname{theorem}{Theorem}{Theorems}
\newtheorem{lemma}[theorem]{Lemma}
\crefname{lemma}{Lemma}{Lemmas}
\crefname{proposition}{Proposition}{Propositions}
\crefname{prop}{Proposition}{Propositions}
\crefname{corollary}{Corollary}{Corollaries}
\crefname{cor}{Corollary}{Corollaries}
\crefname{conjecture}{Conjecture}{Conjectures}
\newtheorem{conj}[theorem]{Conjecture}
\crefname{conj}{Conjecture}{Conjectures}
\newtheorem*{conj*}{Conjecture}
\crefname{conj}{Conjecture}{Conjectures}
\theoremstyle{definition}
\crefname{definition}{Definition}{Definitions}
\newtheorem{defn}[theorem]{Definition}
\crefname{defn}{Definition}{Definitions}
\newtheorem{example}[theorem]{Example}
\crefname{example}{Example}{Examples}
\crefname{notation}{Notation}{Notation}
\newtheorem*{notation*}{Notation}
\crefname{notation}{Notation}{Notation}
\newtheorem{problem}[theorem]{Problem}
\crefname{problem}{Problem}{Problems}
\newtheorem{question}[theorem]{Question}
\crefname{question}{Question}{Questions}
\crefname{condition}{Condition}{Conditions}
\crefname{assumption}{Assumption}{Assumptions}
\theoremstyle{remark}
\newtheorem{rmk}[theorem]{Remark}
\crefname{rmk}{Remark}{Remarks}
\newtheorem*{rmk*}{Remark}
\crefname{rmk}{Remark}{Remarks}
\crefname{remark}{Remark}{Remarks}
\crefname{fact}{Fact}{Facts}
\crefname{claim}{Claim}{Claims}
\newtheorem*{claim*}{Claim}
\crefname{claim}{Claim}{Claims}
\crefname{step}{Step}{Steps}
\crefname{case}{Case}{Cases}
\numberwithin{equation}{section}
\newcommand{\bbC}{\mathbb{C}}
\newcommand{\bbQ}{\mathbb{Q}}
\newcommand{\bbR}{\mathbb{R}}
\newcommand{\bR}{\mathbf{R}}
\newcommand{\bZ}{\mathbf{Z}}
\newcommand{\NE}{\overline{\operatorname{NE}}}
\begin{document}

\title[Bounding cohomology]{Bounding cohomology on a smooth projective surface}

\author{Sichen Li}
\address{School of Mathematical Science, Shanghai Key Laboratory of PMMP, East China Normal University, Math. Bldg , No. 500, Dongchuan Road, Shanghai, 200241, P. R. China}
\email{\href{mailto:lisichen123@foxmail.com}{lisichen123@foxmail.com}}
\urladdr{\url{https://www.researchgate.net/profile/Sichen_Li4}}
\begin{abstract}
The following conjecture arose out of discussions between B. Harbourne, J. Ro\'e, C. Cilberto and R. Miranda: for a smooth projective surface $X$ there exists a positive constant $c_X$ such that $h^1(\mathcal O_X(C))\le c_X h^0(\mathcal O_X(C))$ for every prime divisor $C$ on $X$.
We show that the conjecture is true for  some smooth projective surfaces with Picard number 2.
\end{abstract}

\subjclass[2010]{
primary 14C20
}


\keywords{bounded negativity conjecture, bounding cohomology, Picard number 2}

\thanks{The Research was partially supported by the National Natural Science Foundation of China (Grant No. 11471116, 11531007), Science and Technology Commission of Shanghai Municipality (Grant No. 18dz2271000) and the China Scholar Council `High-level university graduate program'.}

\maketitle


\section{Introduction}
In this note we work over the field $\bbC$ of complex numbers.
By a $(negative)~ curve$ on a surface we will mean a reduced, irreducible curve (with negative self-intersection).
By a $(-k)$-$curve$, we mean a negative curve $C$ with $C^2=-k<0$.

The bounded negativity conjecture (BNC for short)  is one of the most intriguing problems in the theory of projective surfaces and can be formulated as follows.
\begin{conj}\cite[Conjecture 1.1]{B.etc.13}\label{BNC}
For a smooth projective surface $X$ there exists an integer $b(X)\ge0$ such that $C^2\ge-b(X)$ for every curve $C\subseteq X$.
\end{conj}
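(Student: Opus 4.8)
The plan is to reduce \Cref{BNC} to a uniform upper bound on the canonical degree of negative curves, and then to attack that bound through the minimal model programme. Every curve with $C^2\ge 0$ already satisfies $C^2\ge -b(X)$, so only negative curves matter, and for an integral curve the adjunction formula $2p_a(C)-2=C^2+K_X\cdot C$ together with $p_a(C)=h^1(\cO_C)\ge 0$ yields
\[
-C^2\;=\;K_X\cdot C+2-2p_a(C)\;\le\;K_X\cdot C+2 .
\]
Hence \Cref{BNC} for $X$ follows from the single assertion that there is a constant $M_X$ with $K_X\cdot C\le M_X$ for every prime divisor $C$. The first step is the case in which $-K_X$ is nef: then $K_X\cdot C\le 0$ for all $C$, the displayed inequality gives $-C^2\le 2$, and one may take $b(X)=2$. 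This already settles abelian, K3 and Enriques surfaces, as well as the rational surfaces with nef anticanonical class.

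Next I would stratify by Kodaira dimension. If $\kappa(X)\ge 0$, fix the contraction $\pi\colon X\to X_0$ to the minimal model, so that $K_{X_0}$ is nef and $K_X=\pi^{*}K_{X_0}+\sum a_iE_i$ with $a_i\ge 1$ on the finitely many exceptional curves. For a negative curve $C$ one then has $K_X\cdot C=K_{X_0}\cdot\pi_{*}C+\sum a_i(E_i\cdot C)$, and the problem splits into bounding $K_{X_0}\cdot\pi_*C$ above on the minimal model and controlling the exceptional contributions. On $X_0$ the surfaces with $\kappa=0$ have $K_{X_0}$ numerically trivial and the properly elliptic ones have $K_{X_0}$ supported on fibres, so $K_{X_0}\cdot\pi_*C$ is immediately bounded there; for minimal surfaces of general type with $c_1^2>c_2$ I would invoke Bogomolov's theorem, which makes the curves of geometric genus $0$ and $1$ a finite set and thereby bounds the self-intersection of every low-genus negative curve, leaving the higher-genus curves and the complementary range $c_1^2\le c_2$ to a separate positivity argument.

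The genuine obstacle is that \emph{bounded negativity is not inherited through blow-ups}: if $\bar C$ passes through a blown-up point with multiplicity $m$, its strict transform satisfies $C^2=\bar C^2-m^2$, so even a complete bound on $X_0$ fails to bound $X$ unless one also bounds these multiplicities, i.e.\ the singularities that arbitrary prime divisors are forced to acquire at the centres of $\pi$. This is sharpest when $\kappa(X)=-\infty$: the minimal models $\bbP^2$ and the Hirzebruch surfaces carry at most one negative curve, so the entire content is concentrated in the blow-ups, and for a blow-up of $\bbP^2$ at ten or more very general points the desired bound is exactly the prediction of the SHGH conjecture and the finiteness of the associated Harbourne constants. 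The hard part, therefore, is this rational-surface case: my intended route is to bound the multiplicities by specialising the configuration of centres to a combinatorially controlled position and transporting the bound back through the semicontinuity of $h^0(\cO_X(C))$, but I expect that making such a specialisation yield a bound valid for \emph{every} prime divisor is precisely the step where the argument must supply new positivity for linear systems on rational surfaces, and this is the crux on which a proof of the conjecture in full generality would stand or fall.
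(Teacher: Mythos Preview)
The statement you are attempting to prove is \emph{Conjecture}~\ref{BNC}, the Bounded Negativity Conjecture, and the paper does not prove it: it is quoted from \cite{B.etc.13} as an open problem and is used only as background motivation. There is therefore no ``paper's own proof'' to compare your proposal against.

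Your write-up is a reasonable survey of the known reductions and of why BNC is hard, but it is not a proof, and you yourself acknowledge as much in the last paragraph. The reduction via adjunction to a uniform bound on $K_X\cdot C$ is standard and correct; the case $-K_X$ nef is correct; the remarks on $\kappa(X)=0,1$ and on Bogomolov's theorem for minimal general type with $c_1^2>c_2$ are the known partial results. The genuine gap is exactly where you locate it: bounded negativity is not preserved under blow-ups, and your proposed route of ``specialising the configuration of centres to a combinatorially controlled position and transporting the bound back through the semicontinuity of $h^0(\cO_X(C))$'' is not an argument. Semicontinuity of $h^0$ does not control $C^2$ for \emph{all} prime divisors on the general fibre from information on a special fibre; in a family the set of prime divisors changes, and a bound valid after specialisation says nothing about curves that do not specialise to curves. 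This is precisely why BNC remains open for blow-ups of $\bbP^2$ at ten or more general points, and why it is tied to SHGH. In short, your proposal correctly diagnoses the obstruction but does not overcome it; no proof of Conjecture~\ref{BNC} in full generality is currently known.
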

Let us say that a smooth projective surface  $X$ has
\begin{equation*}
b(X)>0
\end{equation*}
 if there is at least one negative curve on $X$.

In \cite{BPS17}, T. Bauer, P. Pokora and D. Schmitz established the following theorem.
\begin{theorem}\cite[Theorem]{BPS17}\label{BPS17Thm}
For a smooth projective surface $X$ over an an algebraic closed field the following two statements are equivalent:
\begin{enumerate}
\item[(i)] $X$ has bounded Zariski denominators.
\item[(ii)] X  satisfies the BNC.
\end{enumerate}
\end{theorem}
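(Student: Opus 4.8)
\emph{Proof strategy for \cref{BPS17Thm}.} I would prove the two implications separately; the implication (ii)$\Rightarrow$(i) is routine, and (i)$\Rightarrow$(ii) is where the work lies.

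For (ii)$\Rightarrow$(i), assume the BNC holds, say $C^2\ge -b(X)$ for every curve $C$. Let $D$ be an integral pseudoeffective divisor with Zariski decomposition $D=P_D+N_D$, and write the negative part as $N_D=\sum_{i=1}^r a_iC_i$, where $C_1,\dots,C_r$ are distinct negative curves whose intersection matrix $M=(C_i\cdot C_j)$ is negative definite. The classes $C_i$ are then linearly independent in $\Num(X)_{\bbR}$, so $r$ is at most the Picard number $\rho(X)$. The entries of $M$ are uniformly bounded: each $C_i^2$ satisfies $-b(X)\le C_i^2<0$, one has $0\le C_i\cdot C_j$ because the $C_i$ are distinct irreducible curves, and negative definiteness of the $2\times 2$ principal minors forces $(C_i\cdot C_j)^2<C_i^2C_j^2\le b(X)^2$; hence $|M_{ij}|\le b(X)$ for all $i,j$. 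By Hadamard's inequality, $1\le|\det M|\le\bigl(\sqrt{\rho(X)}\,b(X)\bigr)^{\rho(X)}$. Finally, the equations $P_D\cdot C_i=0$ express $(a_j)_j$ as the solution of the linear system $M(a_j)_j=(D\cdot C_i)_i$ with integral right-hand side, so by Cramer's rule $(\det M)a_i\in\bbZ$, and thus $(\det M)N_D$ and $(\det M)P_D$ are integral. Hence the Zariski denominator of $D$ divides $\det M$ and is bounded independently of $D$.

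For (i)$\Rightarrow$(ii), assume every Zariski denominator is at most $d$. It suffices to bound $s:=-C^2$ for an arbitrary negative curve $C$. The idea is to exhibit a pseudoeffective divisor whose Zariski decomposition has a controlled fractional coefficient along $C$. Put $m_C=\gcd\{E\cdot C:E\in\NS(X)\}$; note $m_C\mid C^2$, hence $m_C\mid s$. Using that $\Amp(X)$ is open, I would choose an ample integral divisor $A$ with $A\cdot C$ lying in the residue class of $m_C$ modulo $s$ (start from a class $E_0$ with $E_0\cdot C=m_C$ and from any ample $A_0$, and pass to a combination $nA_0+kE_0$ with $n\gg 0$), so that $\gcd(A\cdot C,s)=m_C$. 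Now set $j=\lceil (A\cdot C)/s\rceil$ and $D=A+jC$, an integral divisor. The key point is that $P:=A+\tfrac{A\cdot C}{s}\,C$ is nef: indeed $P\cdot C=A\cdot C-A\cdot C=0$, while for every irreducible curve $C'\ne C$ one has $P\cdot C'=A\cdot C'+\tfrac{A\cdot C}{s}(C\cdot C')>0$ since $A$ is ample and $C\cdot C'\ge 0$. Hence $D=P+\lambda C$, with $\lambda=j-(A\cdot C)/s\in(0,1]$, is the (unique) Zariski decomposition of $D$, and its denominator is $s/\gcd(A\cdot C,s)=s/m_C$. Therefore $s\le d\,m_C$. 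To conclude I would bound $m_C$ in terms of $s$: the class $C/m_C$ lies in the dual lattice $\Num(X)^{\vee}$, so $(C/m_C)^2=-s/m_C^2$ is a negative element of $\tfrac{1}{|\operatorname{disc}\Num(X)|}\bbZ$, whence $m_C^2\le |\operatorname{disc}\Num(X)|\cdot s$. Combining, $s\le d^2\,|\operatorname{disc}\Num(X)|$, so the BNC holds on $X$ with $b(X)=d^2\,|\operatorname{disc}\Num(X)|$.

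The main obstacle is the direction (i)$\Rightarrow$(ii), and within it the two arithmetic points: arranging the ample divisor $A$ to meet $C$ in the prescribed residue class modulo $s$ (so that the gcd appearing in the denominator of $\lambda$ is exactly the divisibility defect $m_C$ rather than a proper multiple of it), and then estimating $m_C$ — that is, controlling how far $[C]$ is from primitive, together with the non‑unimodularity of the Néron–Severi lattice — by $\sqrt{s}$ up to a constant depending only on $X$. If $[C]$ is primitive and $\Num(X)$ is unimodular, this all collapses: one may take $\gcd(A\cdot C,s)=1$ and obtain the clean bound $-C^2\le d^2$ for every negative curve. (The degenerate case $m_C=s$, in which the construction yields $\lambda=0$ and no information, is harmless, since then the lattice estimate already gives $s\le|\operatorname{disc}\Num(X)|$.)
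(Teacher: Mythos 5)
This statement is quoted in the paper from \cite{BPS17} and no proof of it is given in the text, so there is no in-paper argument to compare against; judged on its own terms, your proposal is a correct proof and follows the same circle of ideas as the cited Bauer--Pokora--Schmitz argument. For (ii)$\Rightarrow$(i) your reasoning is the standard one and is complete: the components of the negative part are linearly independent (so $r\le\rho(X)$), the Gram matrix has integer entries bounded by $b(X)$ by negative definiteness of the $2\times2$ minors, Hadamard bounds $|\det M|$, and Cramer's rule applied to $P_D\cdot C_i=0$ shows the denominators divide $\det M$. For (i)$\Rightarrow$(ii) your construction $D=A+\lceil (A\cdot C)/s\rceil C$ with $P=A+\tfrac{A\cdot C}{s}C$ nef is exactly the right one, and you correctly identify the genuine arithmetic issue that a naive choice of $A$ only yields $s\le d\cdot\gcd(A\cdot C,s)$: your remedy --- minimizing the gcd to the divisibility defect $m_C=\gcd\{E\cdot C\}$ by an ample class of the form $nA_0+E_0$, and then bounding $m_C^2\le|\operatorname{disc}\Num(X)|\cdot s$ because $C/m_C$ lies in the dual lattice --- is sound and even gives the effective bound $b(X)\le d^2|\operatorname{disc}\Num(X)|$. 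Two cosmetic slips: the coefficient $\lambda=\lceil (A\cdot C)/s\rceil-(A\cdot C)/s$ lies in $[0,1)$, not $(0,1]$ (the case $\lambda=0$ is exactly your degenerate case $m_C=s$, which you do handle); and the congruence arrangement for $A\cdot C$ should be spelled out, e.g.\ take $A=nA_0+E_0$ with $n$ a large multiple of $s/m_C$, so that $\gcd(A\cdot C,s)=m_C$ --- with that made explicit the argument is complete.
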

Here, $X$ has bounded Zariski denominators (cf. \cite{BPS17}) if there exists an integer $d(X)\ge1$ such that for every pseudo-effective integral divisor $D$ the denominators in the Zariski decomposition of $D$ are bounded from above by $d(X)$  (cf. \cite{Zariski62, Fujita79}).

The main aim of this note is to study the following conjecture, which implies \cref{BNC} (cf. \cite[Proposition 14]{C.etc.17}).
\begin{conj}\cite[Conjecture 2.5.3]{B.etc.12}\label{BH}
Let $X$ be a smooth projective surface.
Then there exists a positive constant $c_X$ such that  $h^1(\mathcal O_X(C))\le c_Xh^0(\mathcal O_X(C))$ for every curve $C$ on $X$.
\end{conj}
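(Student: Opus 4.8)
The plan is to decouple the two cohomology groups: first bound $h^1(\mathcal{O}_X(C))$ by the single intersection number $K_X\cdot C$, and only afterwards compare this bound with $h^0(\mathcal{O}_X(C))$. Write $q(X)=h^1(\mathcal{O}_X)$. Since $C$ is an effective Cartier divisor, the restriction sequence
\begin{equation*}
0\to\mathcal{O}_X\to\mathcal{O}_X(C)\to\mathcal{O}_C(C)\to0
\end{equation*}
gives, via its long exact sequence, $h^1(\mathcal{O}_X(C))\le q(X)+h^1(\mathcal{O}_C(C))$. As $C$ is a Cartier divisor on a smooth surface it is Gorenstein with $\omega_C=\mathcal{O}_C(K_X+C)$ by adjunction, so Serre duality on $C$ yields
\begin{equation*}
h^1(\mathcal{O}_C(C))=h^0\!\big(C,\omega_C\otimes\mathcal{O}_C(-C)\big)=h^0\!\big(C,\mathcal{O}_C(K_X)\big).
\end{equation*}
Because $C$ is integral, a line bundle of degree $d$ on $C$ has at most $\max(0,d+1)$ global sections, and here $d=K_X\cdot C$, producing the key estimate
\begin{equation}\label{star}
h^1(\mathcal{O}_X(C))\le q(X)+\max\!\big(0,\,K_X\cdot C+1\big).
\end{equation}

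Every curve satisfies $h^0(\mathcal{O}_X(C))\ge1$, so \eqref{star} already settles the conjecture whenever $K_X\cdot C$ is bounded above by a constant $B_X$ depending only on $X$: the choice $c_X=q(X)+B_X+1$ works. This disposes of every surface with $-K_X$ nef or $K_X$ numerically trivial (where $K_X\cdot C\le0$), and more generally of any regime in which only finitely many numerical values of $K_X\cdot C>0$ occur. The entire difficulty is thus pushed into the single regime $K_X\cdot C\gg0$, where I must instead exhibit a lower bound $h^0(\mathcal{O}_X(C))\gtrsim K_X\cdot C$.

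To treat that regime I would invoke the Zariski decomposition $C=P+N$ with $P$ nef and $N$ the negative part, using $h^0(\mathcal{O}_X(C))=h^0(\mathcal{O}_X(\lfloor P\rfloor))$. When $P$ is big, Riemann--Roch shows $h^0(\mathcal{O}_X(C))$ grows quadratically in $P$ and so dominates the linear quantity $K_X\cdot C$ once $P^2$ is large, whence \eqref{star} gives $h^1\le c_X h^0$. The residual curves are the \emph{rigid} ones, where $h^0(\mathcal{O}_X(C))$ stays small (possibly $1$) while $K_X\cdot C$ is large; by adjunction $K_X\cdot C=2p_a(C)-2-C^2$, so a large value is forced to come from either a large arithmetic genus $p_a(C)$ or a very negative self-intersection $C^2$.

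Controlling these rigid curves is the step I expect to be the genuine obstacle, and it is exactly where the statement meets bounded negativity. A uniform lower bound $C^2\ge-b(X)$ (equivalently, by \cref{BPS17Thm}, bounded Zariski denominators) together with a genus bound is precisely what would keep $K_X\cdot C$ comparable to $h^0(\mathcal{O}_X(C))$ in \eqref{star}; conversely the conjecture implies \cref{BNC} (cf. \cite[Proposition 14]{C.etc.17}), so no argument circumventing bounded negativity can succeed for an arbitrary $X$. For this reason the honest expectation is that the full statement cannot be proved outright, and the realistic route is to restrict to surfaces where the negative curves, and hence $K_X\cdot C$ on rigid curves, are pinned down by the geometry of the effective and nef cones---most tractably when $\rho(X)=2$, where these cones are controlled by their two boundary rays and the intersection numbers entering \eqref{star} can be bounded directly.
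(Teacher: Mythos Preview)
The statement in question is a \emph{conjecture}, not a theorem; the paper does not prove it, and indeed explicitly records that it is \emph{false} in general (a counterexample for surfaces of general type is cited as \cite[Corollary~3.1.2]{B.etc.12}). What the paper actually proves is the list of partial results in \cref{Main}, mostly under the hypothesis $\rho(X)=2$. You seem to be aware of this by the end of your write-up, so your proposal should be read as a strategy sketch rather than a purported proof.

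That said, your reduction is correct and interesting, and it differs from the paper's. The paper works directly with Riemann--Roch on $X$,
\[
h^1(\cO_X(C))=h^0(\cO_X(C))+h^2(\cO_X(C))-\chi(\cO_X)+\tfrac{1}{2}\bigl((K_X\cdot C)-C^2\bigr),
\]
and introduces the invariant $l_C=(K_X\cdot C)/\max\{1,C^2\}$; the bulk of the work is then to bound $l_C$ uniformly (the hypothesis $\mathbf{Hyp(B)}$) and, when $l_C>1$, to bound $C^2$ against $h^0$. Your route via the restriction sequence and Serre duality on $C$ is cleaner in one respect: your inequality $h^1(\cO_X(C))\le q(X)+\max(0,\,K_X\cdot C+1)$ eliminates $C^2$ entirely and isolates $K_X\cdot C$ as the sole intersection quantity to control. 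Both approaches, however, converge on the same obstruction: one must bound $K_X\cdot C$ against $h^0(\cO_X(C))$ on curves that are rigid and have very negative self-intersection, and this is exactly where bounded negativity enters---which is why the full conjecture fails. Your concluding suggestion to restrict to $\rho(X)=2$, where the two boundary rays of $\NE(X)$ pin down all the intersection numbers, is precisely the regime the paper treats in \cref{Main}(3)--(6).
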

 In \cite{B.etc.12}, the authors disproved   \cref{BH} by giving a counterexample of surface of general type (cf. \cite[Corollary 3.1.2]{B.etc.12}).
 However, they pointed out that it  could still be true that  \cref{BH} holds when restricted to $rational$ surfaces, in any characteristic.
 Indeed, the smooth projective rational surfaces with an effective anticanoncial divisor satisfy  \cref{BH} (cf. \cite[Proposition 3.1.3]{B.etc.12}).
In particular, if $X$ is the  blow-up of $\mathbb P^2$ at $n$ generic points and $c_X=0$, then   \cref{BH} for this $X$ is an equivalent version of the  SHGH conjecture as follows.
\begin{conj} \cite[Conjecture 2.5.1]{B.etc.12} \label{SHGH}
Let $X$ be the blow-up of $\mathbb P^2$ at $n$ generic points. Then $h^1(X,\mathcal O_X(C))=0$ for every curve $C$ on $X$.
\end{conj}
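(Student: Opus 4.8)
The plan is to run the two classical reductions for linear systems of plane curves through general points. Write $X=\mathrm{Bl}_{p_1,\dots,p_n}\bbP^2$, with $\Pic(X)=\bbZ H\oplus\bigoplus_{i=1}^n\bbZ E_i$ and the $p_i$ general, and let $C$ be a prime divisor. If $C=E_i$ the sequence $0\to\mathcal O_X\to\mathcal O_X(C)\to\mathcal O_{\bbP^1}(-1)\to 0$ forces $h^1(\mathcal O_X(C))=0$, so assume $C\sim dH-\sum_i m_iE_i$ with $d\ge 1$; using generality of the points together with the $S_n$-symmetry, order the multiplicities as $m_1\ge\dots\ge m_n$. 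Irreducibility gives $m_i=C\cdot E_i\ge 0$; every $(-1)$-curve is rigid with $h^0=1=\chi$ and so already satisfies $h^1=0$, so I may also assume $C$ is not a $(-1)$-curve, and then $C\cdot(H-E_1-E_2)\ge 0$ yields $d\ge m_1+m_2$. Finally $h^2(\mathcal O_X(C))=h^0(\mathcal O_X(K_X-C))=0$ because $(K_X-C)\cdot H=-(d+3)<0$ while $H$ is nef; hence $h^1(\mathcal O_X(C))=h^0(\mathcal O_X(C))-\chi(\mathcal O_X(C))$, and the conjecture for $C$ becomes the assertion that $h^0(\mathcal O_X(C))=\chi(\mathcal O_X(C))=\binom{d+2}{2}-\sum_i\binom{m_i+1}{2}$, i.e.\ that the plane system of degree $d$ with an $m_i$-fold base point at each $p_i$ has the expected dimension.

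First I would reduce to \emph{standard} classes, those with $d\ge m_1+m_2+m_3$. If instead $m_1+m_2+m_3>d$, apply the standard quadratic Cremona transformation based at $p_1,p_2,p_3$: since the points are general this lifts to an isomorphism $X\cong X'$ with $X'$ again a blow-up of $\bbP^2$ at $n$ general points, inducing on Picard groups the involution $d\mapsto 2d-m_1-m_2-m_3$, $\{m_1,m_2,m_3\}\mapsto\{d-m_2-m_3,\,d-m_1-m_3,\,d-m_1-m_2\}$, $m_i\mapsto m_i$ for $i>3$. Thus $h^j(X,\mathcal O_X(C))=h^j(X',\mathcal O_{X'}(C'))$ for the (again prime) image curve $C'$, the new degree $d'=2d-m_1-m_2-m_3$ is $\ge 0$ (add $d\ge m_1+m_2$ and $d\ge m_1+m_3$), and $d'<d$. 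Reordering and iterating, $d$ strictly decreases while staying non-negative, so after finitely many steps I reach a standard class (or a $(-1)$-curve, already handled).

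Then I would establish the vanishing for standard classes. The established route is degeneration: either specialize the $p_i$ onto a line or a conic and run the Horace method and its multiple-point refinements, or degenerate $\bbP^2$ to two planes glued along a line and control the limit linear system as in the Ciliberto--Miranda technique, organizing everything as an induction on $d$ (and on $\sum_i m_i$) with small-degree or few-point base cases done directly. For $n\le 9$ this can be completed unconditionally: there $X$ is del Pezzo (or $n=9$, $-K_X$ nef), an irreducible curve that is not a $(-1)$-curve has $C^2\ge 0$ and is therefore nef, so $C-K_X$ is nef and big, and Kawamata--Viehweg vanishing --- with the handful of $n=9$ boundary cases $C=a(-K_X)$ dispatched by Riemann--Roch --- gives $h^1=0$.

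The hard part will be exactly this last step for $n\ge 10$. It contains as a special case the statement that the only negative curves on the blow-up of $\bbP^2$ at $n$ general points are $(-1)$-curves --- for an irreducible $C$ with $C^2\le -2$ one has $\chi(\mathcal O_X(C))=C^2-p_a(C)+2\le 0<h^0(\mathcal O_X(C))$ --- which is intertwined with Nagata's conjecture; more generally one must rule out that a standard class acquires an unexpected multiple curve in its base locus over a degenerate surface. No presently available technique controls the dimensions of the limit systems uniformly in $n$ and in the $m_i$, which is why \cref{SHGH} --- equivalently, \cref{BH} with $c_X=0$ for blow-ups of $\bbP^2$ at $n$ general points --- remains open, even though the first two steps are routine.
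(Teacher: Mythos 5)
The statement you were asked to prove is not proved anywhere in the paper: it is quoted verbatim from the literature as the SHGH conjecture, and the paper only uses it as a reference point (it is the $c_X=0$ instance of \cref{BH} for blow-ups of $\bbP^2$ at generic points). So there is no paper proof to compare against, and your submission, as you yourself say in the last paragraph, is not a proof either.

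Your preliminary reductions are correct and standard: the cases $C=E_i$ and $C$ a $(-1)$-curve, the vanishing of $h^2$ via $(K_X-C)\cdot H<0$, the reformulation as the statement that $h^0$ equals $\chi=\binom{d+2}{2}-\sum_i\binom{m_i+1}{2}$, the Cremona reduction to standard classes $d\ge m_1+m_2+m_3$, and the unconditional case $n\le 9$ via Kawamata--Viehweg vanishing on the (weak) del Pezzo surface. But the remaining step --- the vanishing (equivalently the non-speciality) of standard classes for $n\ge 10$ general points --- is exactly the content of the conjecture; the Horace and Ciliberto--Miranda degeneration arguments you invoke are known to settle only partial ranges (bounded multiplicities, homogeneous cases, small $n$), not the general statement. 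So the ``proof'' terminates precisely where the open problem begins, and nothing in the present paper would help you close it: the paper's actual results concern surfaces of Picard number $2$ and hypotheses $\mathbf{Hyp(A)}$--$\mathbf{Hyp(C)}$, which do not apply to a blow-up of $\bbP^2$ at $n\ge 2$ points. The honest conclusion is that \cref{SHGH} should be treated as an assumption, not as something either you or the paper has established.
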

In order to give our main result, we now recall  the following question posed in \cite{C.etc.17}.
\begin{question}\cite[Question 4]{C.etc.17}\label{Que4}
Does there exist a constant $m(X)$ such that $\frac{(K_X\cdot D)}{D^2}<m(X)$ for any effective divisor $D$ with $D^2>0$ on a smooth projective surface  $X$?
\end{question}
 If  \cref{BH} is true for a smooth projective surface $X$, then $X$ is affirmative for  \cref{Que4} (cf. \cite[Proposition 15]{C.etc.17}).
This motivates us to give the following definition.
\begin{defn}
Let $X$ be a smooth projective surface.
\begin{enumerate}
\item[(1)] For every $\bbR$-divisor $D$ with $D^2\ne0$ on $X$, we define a value of $D$ as follows:
\begin{equation*}
                                                       l_D:=\frac{(K_X\cdot D)}{\max\bigg\{ 1, D^2\bigg\}}.
\end{equation*}
\item[(2)] For every $\bbR$-divisor  $D$ with $D^2=0$ on $X$, we define a value of $D$ as follows:
\begin{equation*}
                           l_D:=\frac{(K_X\cdot D)}{\max\bigg\{1,h^0(\mathcal O_X(D))\bigg\}}.
\end{equation*}
\item[(3)] $X$ satisfies $\mathbf{Hyp(A)}$ if  $\NE(X)=\sum_{i=1}^{\rho(X)}\bbR_{\ge0}[C_i]$ such that each $C_i$ is a curve. Here, $\rho(X)$ is the Picard number of $X$.
\item[(4)] $X$ satisfies $\mathbf{Hyp(B)}$ if there exists a positive constant $m(X)$ such that $l_C\le m(X)$ for every curve $C^2\ne0$ on $X$.
\item[(5)] $X$ satisfies $\mathbf{Hyp(C)}$ if there exists a positive constant $m(X)$ such that $l_C\le m(X)$ for every curve $C$ on $X$.
\end{enumerate}
\end{defn}
To solve   \cref{BH} partially, for the case when  $\rho(X)=2$, we give the main result as follows.
\begin{theorem}\label{Main}
Let $X$ be a smooth projective surface. The following statements hold.
\begin{enumerate}
\item[(1)]  If $X$ satisfies the BNC and there exists a positive constant $m(X)$ such that $l_C\le m(X)$ for every curve $C$ on $X$ and  $D^2\le m(X)h^0(\mathcal O_X(D))$ for every curve $D$ with $l_D>1$ and $D^2>0$ on $X$, then $X$ satisfies  \cref{BH}.
\item[(2)] Suppose $\kappa(X)=0$ and the canonical divisor $K_X$ is nef. Then $X$ satisfies   \cref{BH}.
\item[(3)] Suppose  $\rho(X)=2$ and $\kappa(X)=-\infty$. Then $X$ satisfies $\mathbf{Hyp(B)}$. In particular, every ruled surface with  invariant $e>0$ satisfies \cref{BH}.
\item[(4)]  Suppose $\rho(X)=2$ and $X$ has two negative curves. Then $X$ satisfies \cref{BH}.
\item[(5)] Supoose $\rho(X)=2, \kappa(X)=1$ and $b(X)>0$. Then $X$ satisfies \cref{BH}.
\item[(6)] Suppose $\rho(X)=2$ and $X$ satisfies  $\mathbf{Hyp(A)}$. Then $X$ satisfies $\mathbf{Hyp(B)}$.
\end{enumerate}
\end{theorem}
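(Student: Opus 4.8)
The plan is to convert $\mathbf{Hyp(B)}$ into a one-line estimate using the basis of $\NS(X)_{\bbR}$ provided by $\mathbf{Hyp(A)}$. Write $\NE(X)=\bbR_{\ge 0}[C_1]+\bbR_{\ge 0}[C_2]$ with $C_1,C_2$ curves. Since $\rho(X)=2$, the cone $\NE(X)$ is a full-dimensional pointed cone in the $2$-dimensional space $\NS(X)_{\bbR}$, so it has exactly two extremal rays; these must be $\bbR_{\ge 0}[C_1]$ and $\bbR_{\ge 0}[C_2]$, hence $[C_1],[C_2]$ are linearly independent and form a basis of $\NS(X)_{\bbR}$. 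Moreover $C_i^2\le 0$ for $i=1,2$: a curve of positive self-intersection is big, hence lies in the interior of $\NE(X)$ rather than on an extremal ray. I would first dispose of curves $C$ with $C^2<0$. If such a $C$ were different from $C_1$ and $C_2$, write $[C]=a[C_1]+b[C_2]$ with $a,b\ge 0$; using $x:=C\cdot C_1\ge 0$ and $y:=C\cdot C_2\ge 0$ (distinct irreducible curves meet nonnegatively) together with $C_i^2\le 0$, the identity $C^2=ax+by$ and an inspection of the degenerate cases $a=0$, $b=0$ would force $C^2\ge 0$, a contradiction. Hence $C\in\{C_1,C_2\}$, a finite set on which $l_C=K_X\cdot C$ takes two fixed values.

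It remains to bound $l_C=\dfrac{K_X\cdot C}{C^2}$ for curves $C$ with $C^2>0$. Write $[C]=a[C_1]+b[C_2]$; then $a,b\ge 0$ since $[C]\in\NE(X)$, and in fact $a,b>0$ because $C^2>0$ while $C_1^2,C_2^2\le 0$. Next I would use integrality: let $N$ be the index of $\bbZ[C_1]\oplus\bbZ[C_2]$ in $\NS(X)$ modulo torsion, so that $a,b\in\tfrac1N\bbZ$ and therefore $a,b\ge\tfrac1N$. Write $K_X\equiv a_1[C_1]+a_2[C_2]$ with $a_1,a_2\in\bbR$. Since $C\ne C_1,C_2$, the intersection numbers $x:=C\cdot C_1$ and $y:=C\cdot C_2$ are nonnegative, and $x+y>0$ (otherwise $C^2=ax+by=0$). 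The key point is that in the variables $x,y$ the ratio of quadratic forms becomes a ratio of linear forms: from $C^2=C\cdot[C]=ax+by$ and $K_X\cdot C=a_1x+a_2y$ we get
\[
l_C=\frac{a_1x+a_2y}{ax+by}\le\frac{\max(a_1,0)\,x+\max(a_2,0)\,y}{\tfrac1N(x+y)}\le N\max(a_1,a_2,0).
\]
Thus $m(X):=\max\{1,\ K_X\cdot C_1,\ K_X\cdot C_2,\ N\max(a_1,a_2,0)\}$ is a valid positive constant, and $X$ satisfies $\mathbf{Hyp(B)}$.

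The only delicate points are bookkeeping ones: identifying the negative curves with the $C_i$ of negative self-intersection, and tracking the torsion-free N\'eron--Severi lattice to obtain the uniform lower bound $a,b\ge\tfrac1N$ on the coordinates of an effective class; without such a bound the ratio $l_C$ would not be controlled. I do not expect a substantive obstacle beyond these; the heart of the argument is the bilinear identity $C^2=ax+by$ followed by the trivial bound on a quotient of nonnegative linear forms. As a consistency check, when $-K_X$ is pseudo-effective one has $a_1,a_2\le 0$ and the estimate even yields $l_C\le 0$ for every curve with $C^2>0$, which recovers the easy cases.
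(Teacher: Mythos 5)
Your argument, as written, proves only item (6) of \cref{Main}: from $\mathbf{Hyp(A)}$ and $\rho(X)=2$ you deduce $\mathbf{Hyp(B)}$. That part is essentially correct: negative curves must coincide with $C_1$ or $C_2$ (distinct irreducible curves meet nonnegatively, and $C^2=a(C\cdot C_1)+b(C\cdot C_2)\ge 0$ otherwise), and for $C^2>0$ the lattice-index bound $a,b\ge 1/N$ together with $C\cdot C_i\ge 0$ gives $l_C\le N\max(a_1,a_2,0)$. But the statement you were asked to prove has six items, and items (1)--(5) are not touched at all. They require genuinely different ingredients: (1) is a Riemann--Roch/Serre-duality reduction (using $h^2(\mathcal O_X(C))\le p_g(X)$, the BNC bound on $C^2$ for negative curves, and the extra hypothesis $D^2\le m(X)h^0(\mathcal O_X(D))$ when $l_D>1$) and is the engine that converts bounds on $l_C$ into the $h^1\le c_X h^0$ statement of \cref{BH}; (2) uses $K_X\equiv 0$ and adjunction; (3) needs the classification of relatively minimal surfaces with $\rho=2$ and the explicit intersection theory of ruled surfaces (plus a separate argument for the one-point blow-up of $\bbP^2$); (4) and (5) then feed the resulting estimates back into (1), with (5) additionally invoking the elliptic (Iitaka) fibration when $\kappa(X)=1$. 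Note in particular that your method yields only $\mathbf{Hyp(B)}$, never \cref{BH}: bounding $l_C$ does not by itself bound $h^1/h^0$, so even the ``in particular'' clauses of (3)--(5) remain unproved. This is the substantive gap.

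On the part you did prove, your route differs from the paper's and is arguably cleaner. The paper first disposes of $\kappa(X)=-\infty$ via the ruled-surface case analysis, then for $\kappa(X)\ge 0$ uses that $K_X$ is $\bbQ$-effective to write $K_X\equiv aC_1+bC_2$ with $a,b\ge 0$, and obtains lower bounds on the coordinates $a_1,a_2$ of a curve $D$ with $D^2>0$ from nefness and the integrality of $D\cdot C_i$, with separate cases according to $C_i^2<0$ or $C_i^2=0$. You avoid the Kodaira-dimension split entirely by allowing $K_X\equiv a_1C_1+a_2C_2$ with real coefficients and capping them at $0$ in the numerator, and you replace the case-by-case coordinate bounds by the uniform bound $a,b\ge 1/N$ coming from the index $N$ of $\bbZ[C_1]+\bbZ[C_2]$ in $\NS(X)$ modulo torsion. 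That is a legitimate and more uniform proof of (6), but it covers only one sixth of the theorem.
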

\begin{rmk}
\begin{enumerate}
\item It is hard to establish that there exists a positive constant $m(X)$ such that $l_D\le m(X)$ for $D\in|nC|$ with the Iitaka dimension $\kappa(X,C)=1$ and $n\gg0$, where $C$ is a curve on $X$.
This is related to effectivity of Iitaka fibrations, which are known for the pluricanonical system $|mK_X|$ of every smooth projective variety $X$ in arbitrary dimension (cf. \cite{Iitaka70,TX09,BZ16}).
Therefore, we have to consider a weaker hypothesis $\mathbf{Hyp(B)}$.
\item In \cite[Claim 2.11]{Li19},  we give a classification of the smooth projective surfaces $X$ with $\rho(X)=2$ and two negative curves $C_1$ and $C_2$.
Here, the closed Mori cone $\NE(X)=\bbR_{\ge0}[C_1]+\bbR_{\ge0}[C_2]$, i.e, $X$ satisfies $\mathbf{Hyp(A)}$.
Moreover, see \cref{Hyp(A)E} about \cref{Main}(6).
\end{enumerate}
\end{rmk}
\section{The Proof  of   \cref{Main}}
In this section, we divide our proof of   \cref{Main} into some steps.
\begin{proof}[Proof~of~\cref{Main} (1)]
Take a curve $C$ on $X$. Note that by Serre duality (cf. \cite[Corollary III.7.7 and III.7.12]{Hartshorne77}), $h^2(\mathcal O_X(C))=h^0(\mathcal O_X(K_X-C))\le p_g(X)$.
As a result,
\begin{equation}\label{eq1}
h^2(\mathcal O_X(C))-\chi(\mathcal O_X)\le  q(X)-1.
\end{equation}
Here, $p_g(X)$ and $q(X)$ are the geometric genus of $X$ and the irregularity of $X$ respectively. Our main condition is the following:

(*) \label{*} There exists a positive constant $m(X)$ such that $l_C\le m(X)$ for every curve $C$ on $X$ and  $D^2\le m(X)h^0(\mathcal O_X(D))$ for every curve $D$ with $l_D>1$ and $D^2>0$ on $X$.

We divide the proof into the following three cases.

Case~(i).
Suppose $C^2>0$. Then by Riemann-Roch theorem (cf. \cite[Theorem V.1.6]{Hartshorne77}),
\begin{equation}\label{eq2}
                                             h^1(\mathcal O_X(C))=h^0(\mathcal O_X(C))+h^2(\mathcal O_X(C))-\chi(\mathcal O_X)+\frac{C^2(l_C-1)}{2}.
\end{equation}
If $l_C\le1$, then Equation (\ref{eq1}) and (\ref{eq2}) imply  that $h^1(\mathcal O_X(C))\le h^0(\mathcal O_X(C))+q(X)-1$, which is the desired result by $c_X:=q(X)$.
If $l_C>1$, then Equation (\ref{eq1}) and (\ref{eq2})  and the condition (*)  imply that $2h^1(\mathcal O_X(C))\le (m^2(X)-m(X)+2)h^0(\mathcal O_X(C))+2(q(X)-1)$,
which is the desired result by $2c_X:=m^2(X)-m(X)+q(X)$.

Case~(ii).
Suppose $C^2=0$. Then by Riemann-Roch theorem,
\begin{equation}\label{eq3}
                                             2h^1(\mathcal O_X(C))=2h^2(\mathcal O_X(C))-2\chi(\mathcal O_X)+h^0(\mathcal O_X(C))(l_C+2),
\end{equation}
which, Equation (\ref{eq1}) and the condition (*) imply that
\begin{equation*}
                                             2h^1(\mathcal O_X(C))\le 2(q(X)-1)+h^0(\mathcal O_X(C))(m(X)+2),
\end{equation*}
which is the desired result by $2c_X:=m(X)+2q(X)$.

Case~(iii).
Suppose $C^2<0$. Then $h^0(\mathcal O_X(C))=1$. Since $X$ satisfies the BNC, there exists a positive constant $b(X)$ such that every curve $C$ on $X$ has $C^2\ge-b(X)$. By Riemann-Roch theorem,
\begin{equation}\label{eq4}
                                             2h^1(\mathcal O_X(C))=2+2h^2(\mathcal O_X(C))-2\chi(\mathcal O_X)+l_C-C^2,
\end{equation}
which, Equation (\ref{eq1}) and  the condition (*)  imply that $2h^1(\mathcal O_X(C))\le 2q(X)+m(X)+b(X)$, which is the desired result by $2c_X:=2q(X)+m(X)+b(X)$.

In all, we  complete the proof of \cref{Main}(1).
\end{proof}
\begin{rmk}\label{Rem1}
\begin{enumerate}
\item[(1)] Suppose $X$ satisfies   \cref{BH}.
Then by \cite[Proposition 15]{C.etc.17}, Equation (\ref{eq3}) and (\ref{eq4}), $X$ satisfies $\mathbf{Hyp(B)}$ and $\mathbf{Hyp(C)}$.
\item[(2)] The condition of \cref{Main}(1) may be not necessary.
Let $c_X\gg1$.
Take a curve $C$ with $C^2>0$ on $X$.
Suppose $X$ satisfies \cref{BH}.
Then Equation (\ref{eq1}) and (\ref{eq2}) imply that
\begin{equation*}\begin{split}
                                           \frac{C^2(l_C-1)}{2}&=h^1(\mathcal O_X(C))-h^0(\mathcal O_X(C))-h^2(\mathcal O_X(C))+\chi(\mathcal O_X)\\&\le (c_X-1)h^0(\mathcal O_X(C))+\chi(\mathcal O_X).
\end{split}\end{equation*}
If we can find a sequence $\big\{l_{C_i}\big\}$ on $X$ such that $C_i^2>0, l_{C_i}>1$ and $\lim_{i\to\infty} l_{C_i}=1$, then it is unknown that there exists a positive constant $m(X)$ such that $C^2\le m(X)h^0(\mathcal O_X(C))$ for every curve $C$ with $C^2>0$ and $l_C>1$ on $X$. Therefore, the following question is asked.
\end{enumerate}
\end{rmk}
\begin{question}
Let $X$ be a smooth projective surface. Suppose $X$ satisfies \cref{BH}. Is there  a positive constant $m(X)$ such that $C^2\le m(X)h^0(\mathcal O_X(C))$ for every curve $C$ with $C^2>0$ and $l_C>1$ on $X$?
\end{question}
\begin{proof}[$Proof~of~ \cref{Main} (2)$]
Since $\kappa(X)=0$ and $K_X$ is nef, $K_X\equiv0$ (numerical).
As a result, $l_C=0$ for every curve $C$ on $X$. By the adjunction formula, $C^2\ge-2$.
By Riemann-Roch theorem, $2h^1(\mathcal O_X(C))=2h^0(\mathcal O_X(C))+2h^2(\mathcal O_X(C))-2\chi(\mathcal O_X)-C^2$,
which and Equation (\ref{eq1}) imply that $h^1(\mathcal O_X(C))\le (q(X)+1)h^0(\mathcal O_X(C))$.
Therefore, $X$ satisfies  \cref{BH}.
\end{proof}
\begin{lemma}\label{Ruled}
Every ruled surface satisfies $\mathbf{Hyp(B)}$. In particular, every ruled surface with either invariant $e>0$ or $e=0$ over a curve of genus $g\le1$  satisfy Conjecture \cref{BH}.
\end{lemma}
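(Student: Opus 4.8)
The plan is to realize a ruled surface as $X=\bbP(\cE)\xrightarrow{\pi}B$ over a smooth curve $B$ of genus $g$, with $\cE$ normalized and invariant $e$, so that $\Num(X)=\bbZ[C_0]\oplus\bbZ[f]$ with $C_0^2=-e$, $f^2=0$, $C_0\cdot f=1$, and $K_X\equiv-2C_0+(2g-2-e)f$. The whole argument is then an explicit computation in $\Num(X)$, the only external input being the classification of irreducible curves on ruled surfaces (\cite[V.2.18--V.2.21]{Hartshorne77}), which tells us exactly which numerical classes $aC_0+bf$ contain an irreducible curve.

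First I would prove $\mathbf{Hyp(B)}$. Let $D$ be an irreducible curve with $D^2\ne0$; then $D$ is not a fiber, so $D\cdot f=a\ge1$ and $D\equiv aC_0+bf$ with $a\ge1$. Writing $t:=2b-ae$, one computes $D^2=at$ and $K_X\cdot D=2a(g-1)-t$. If $D^2=at>0$ (so $a\ge1$, $t\ge1$), then
\[
  l_D=\frac{K_X\cdot D}{D^2}=\frac{2(g-1)}{t}-\frac1a\le\max\{0,\,2g-2\}.
\]
If $D^2<0$, the classification forces $e>0$ and $D=C_0$, in which case $D^2=-e<0$ and $l_D=K_X\cdot C_0=e+2g-2$. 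Hence $l_D\le m(X):=\max\{1,\,2g-2,\,e+2g-2\}$ for every curve with $D^2\ne0$, which is $\mathbf{Hyp(B)}$.

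For the ``in particular'' I would verify the hypotheses of \cref{Main}(1) when $e>0$, or when $e=0$ and $g\le1$. The BNC holds with $b(X)=\max\{e,0\}$ (for $e>0$ the unique negative curve is $C_0$; for $e\le0$ there is none). Next I upgrade the bound above to $\mathbf{Hyp(C)}$, i.e.\ I bound $l_C$ on the curves with $C^2=0$: when $e>0$ these are exactly the fibers, with $l_f=-2/\max\{1,h^0(\cO_X(f))\}<0$; when $e=0$ and $g\le1$ they are the fibers together with the curves $C\equiv aC_0$, for which $K_X\cdot C=a(2g-2)\le0$, so again $l_C\le0$. Thus $l_C\le m(X)$ for every curve $C$. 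Finally, a curve $D$ with $D^2>0$ and $l_D>1$ must satisfy $\frac{2(g-1)}{t}-\frac1a>1$, hence $t<2(g-1)$; when $e=0$ and $g\le1$ this is impossible, so the last hypothesis of \cref{Main}(1) is vacuous, while when $e>0$ we also have $t\ge ae\ge a$ (from $b\ge ae$ in the classification), so $a\le t\le2g-3$: there are then only finitely many numerical classes of such $D$, each with $D^2$ bounded by some constant $M$, and since $D$ is effective $h^0(\cO_X(D))\ge1$, so $D^2\le M\le M\cdot h^0(\cO_X(D))$. Enlarging $m(X)$ to be at least $M$, all hypotheses of \cref{Main}(1) hold, and $X$ satisfies \cref{BH}.

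The computations are routine; the substance is entirely in the classification of curves on ruled surfaces. The one genuine obstacle — and the reason the ``in particular'' excludes $e=0$ with $g\ge2$ — is that on such a surface one may have an irreducible curve $C$ with $C^2=0$, $K_X\cdot C>0$ and $\kappa(X,C)=1$ moving in a base-point-free pencil over a base curve of genus $\ge2$, and bounding $l_C$ there is an effective Iitaka-fibration problem, precisely the difficulty flagged in the remark following \cref{Main}. When $e>0$ the only curve with $C^2=0$ is a fiber, and when $g\le1$ every class with $C^2=0$ has $K_X\cdot C\le0$, so in these cases the obstruction simply does not arise.
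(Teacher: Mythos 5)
Your proof is correct, and it reaches the same two conclusions with the same basic toolkit as the paper (Hartshorne's numerical classification of irreducible curves on a ruled surface, plus Theorem~\ref{Main}(1)), but it is organized rather differently. The paper splits into four cases according to the sign of $e$ (and the genus), bounds $l_D$ in each case by a mediant-type estimate such as $l_D\le\max\{2/a,\,|2g-2-e|/b\}$, and in the cases $e>0$ and $e=0,\ g\le 1$ deduces \cref{BH} mostly \emph{directly} from Riemann--Roch: it shows $(K_X-D)\cdot D\le 0$ for all but a bounded family of classes, which gives $h^1(\cO_X(D))\le q(X)h^0(\cO_X(D))$ at once, and only invokes \cref{Main}(1) for the residual classes with $b<2g-2-e$. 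You instead write everything in terms of $t=2b-ae$, getting the single closed formula $l_D=\frac{2(g-1)}{t}-\frac1a$ for $D^2>0$, which yields $\mathbf{Hyp(B)}$ uniformly in $e$ with the explicit constant $\max\{1,2g-2,e+2g-2\}$ (the paper's Case~1--4 bounds become one line), and you then obtain \cref{BH} by verifying \emph{all} hypotheses of \cref{Main}(1): BNC with $b(X)=\max\{e,0\}$, the bound on $l_C$ also for the curves with $C^2=0$ (fibers, and $C\equiv aC_0$ when $e=0$, $g\le1$, where $K_X\cdot C\le0$), and the finiteness/boundedness of classes with $l_D>1$, $D^2>0$ via $a\le t\le 2g-3$. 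Your route buys a cleaner, uniform constant and makes transparent exactly where $e=0$, $g\ge2$ fails (classes $aC_0$ with $C^2=0$, $K_X\cdot C>0$, matching the paper's remark on Iitaka fibrations); the paper's direct $(K_X-D)\cdot D\le0$ computation buys the sharper bound $c_X=q(X)$ on most curves without having to check the $C^2=0$ classes separately. Both arguments are sound.
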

\begin{proof}
Let $\pi: X\rightarrow B$ be a ruled surface over a smooth curve $B$ of genus $g$, with invariant $e$. Let $C\subseteq X$ be a section, and let $f$ be a fibre. By \cite[Proposition V.2.3 and 2.9]{Hartshorne77},
\begin{equation*}
                            \mathrm{Pic}~X\cong \mathbb Z C\oplus \pi^*\mathrm{Pic} B, C\cdot f=1, f^2=0, C^2=-e, K_X\equiv -2C+(2g-2-e)f.
\end{equation*}
Let $D\equiv aC+bf$ with $a,b\in\mathbb Z$ be a curve on $X$.
Now we divide the remaining proof  into the following four cases.

Case 1. Suppose $e>0$. Then by \cite[Proposition V.2.20(a)]{Hartshorne77},  $a>0, b\ge ae$.
As a result, every curve $D(\ne C,f)$ has $D^2>0$, $(C\cdot D)\ge0$ and $(f\cdot D)>0$. Thus,
\begin{equation*}\begin{split}
                                         l_D&=\frac{(K_X\cdot D)}{D^2}\\&\le\frac{2(C\cdot D)+|2g-2-e|(f\cdot D)}{a(C\cdot D)+b(f\cdot D)}\\&\le\mathrm{max}\bigg\{\frac{2}{a},\frac{|2g-2-e|}{b}\bigg\}.
\end{split}\end{equation*}
Here, $a$ and $b$ are positive integers.
Therefore, $X$ satisfies $\mathbf{Hyp(B)}$ and $\mathbf{Hyp(C)}$.
If $b\ge 2g-2-e$, then
\begin{equation}\label{eq5}
                        (K_X-D)D=-(2+a)(C\cdot D)+(2g-2-e-b)(f\cdot D)\le0.
\end{equation}
By Riemann-Roch theorem, Equation (\ref{eq1}) and (\ref{eq5}) imply that
\begin{equation*}\begin{split}
            h^1(\mathcal O_X(D))&=h^0(\mathcal O_X(D))+h^2(\mathcal O_X(D))+\frac{(K_X-D)D}{2}-\chi(\mathcal O_X)\\&\le q(X)h^0(\mathcal O_X(D)).
\end{split}\end{equation*}
If $b<2g-2-e$, then $a<(2g-2-e)e^{-1}$ by $b\ge ae$. As a result, $D^2<2(2g-2-e)^2e^{-1}$. Hence, by  \cref{Main}(i), $X$ satisfies  \cref{BH}.

Case 2.
Suppose $e=0$ and $g\le1$. Then by \cite[Proposition V.2.20(a)]{Hartshorne77}, $a>0$ and $b\ge0$.
As a result, $(K_X-D)D=-(2+a)b+a(2g-2-b)\le0$, which and Equation (\ref{eq1}) imply that
\begin{equation*}\begin{split}
            h^1(\mathcal O_X(D))&=h^0(\mathcal O_X(D))+h^2(\mathcal O_X(D))+\frac{(K_X-D)D}{2}-\chi(\mathcal O_X)\\&\le q(X)h^0(\mathcal O_X(D)).
\end{split}\end{equation*}
Therefore, $X$ satisfies  \cref{BH}.
In particular, by   \cref{Rem1}(1), $X$ satisfies $\mathbf{Hyp(B)}$.

Case~3.
Suppose $e=0$ and $g\ge2$.
Then by \cite[Proposition V.2.20(a)]{Hartshorne77}, $a>0$ and $b\ge0$.
As a result, every curve $D(\ne aC,f)$ has $D^2>0$, $(C\cdot D)>0$ and $(f\cdot D)>0$.
Note that every curve $D$ has zero self-intersection if and only if either $D\equiv aC$ or $D\equiv f$.
Suppose $b>0$.
Then
\begin{equation*}\begin{split}
                                         l_D&=\frac{(K_X\cdot D)}{D^2}\\&\le\frac{2(C\cdot D)+|2g-2|(f\cdot D)}{a(C\cdot D)+b(f\cdot D)}\\&\le\mathrm{max}\bigg\{\frac{2}{a},\frac{|2g-2|}{b}\bigg\}.
\end{split}\end{equation*}
Here, $a$ and $b$ are positive integers.
 Therefore, $X$ satisfies $\mathbf{Hyp(B)}$.

Case 4.
Suppose $e<0$.
Then by \cite[Proposition V.2.21]{Hartshorne77}, every curve $D$ has either $D^2=0$ or $D^2>0$.
Moreover, $D^2>0$  implies that $D$ is ample and $a>0, b>\frac{1}{2}ae$.
Now suppose $D^2>0$.
Then $D\cdot C>0$ and $D\cdot f>0$. Take $C'=C+\frac{1}{2}ef$ and then $D\cdot C'>0$ and
\begin{equation*}\begin{split}
                                         l_D&=\frac{(K_X\cdot D)}{D^2}\\&=\frac{-2(C\cdot D)+(2g-2-e)(f\cdot D)}{a(C'\cdot D)+(b-\frac{1}{2}ae)(f\cdot D)}\\&\le \frac{|4g-4-2e|}{2b-ae}.
\end{split}\end{equation*}
Here, $2b-ae$ is a positive integer. Therefore, $X$ satisfies $\mathbf{Hyp(B)}$.

In all, we complete the proof of  \cref{Ruled}.
\end{proof}
It is well-known that the smooth projective surfaces satisfy the minimal model conjecture (cf. \cite{KM98, BCHM10}) as follows.
\begin{lemma}\label{mmp}
Let $X$ be a smooth projective surface. If the canonical divisor $K_X$ is pseudo-effective, then the Kodaira dimension $\kappa(X)\ge0$.
\end{lemma}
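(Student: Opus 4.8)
The plan is to deduce this from the minimal model program and abundance for surfaces, both of which are available in dimension two, arguing by contraposition: I will show that if $\kappa(X)=-\infty$, then $K_X$ is not pseudo-effective.

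First I would reduce to a minimal surface. Contracting $(-1)$-curves finitely many times produces a birational morphism $\pi\colon X\to X_0$ with $X_0$ minimal (no $(-1)$-curve), and since the Kodaira dimension is a birational invariant, $\kappa(X_0)=\kappa(X)=-\infty$. By the Enriques--Kodaira classification, a minimal surface of Kodaira dimension $-\infty$ is either $\mathbb{P}^2$ or a geometrically ruled surface $p\colon X_0\to B$ over a smooth projective curve $B$.

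Second I would exhibit a nef class of negative canonical degree on $X_0$ and pull it back. If $X_0\cong\mathbb{P}^2$, take $N_0$ to be the class of a line; then $N_0$ is ample and $K_{X_0}\cdot N_0=-3$. If $p\colon X_0\to B$ is ruled, take $N_0$ to be the class of a fibre $f$; then $N_0$ is nef (a fibre meets every curve non-negatively), and adjunction gives $K_{X_0}\cdot f=2p_a(f)-2-f^2=-2$. In either case set $N:=\pi^*N_0$, which is nef on $X$. Writing $K_X=\pi^*K_{X_0}+E$ with $E$ an effective $\pi$-exceptional divisor and using the projection formula, $E\cdot\pi^*N_0=0$, hence $K_X\cdot N=K_{X_0}\cdot N_0<0$. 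Since every pseudo-effective $\mathbb{R}$-divisor has non-negative intersection with every nef class, $K_X$ cannot be pseudo-effective, which is the contrapositive of the claim.

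Equivalently, the argument can be run inside the MMP: the $K_X$-program terminates either in a Mori fibre space (precisely when $K_X$ is not pseudo-effective) or in a minimal model $X_{\min}$ with $K_{X_{\min}}$ nef (when $K_X$ is pseudo-effective), and in the latter case abundance for surfaces yields $\kappa(X)=\kappa(X_{\min})\ge 0$. The only substantive input is the classification statement used above --- that surfaces with $\kappa=-\infty$ are rational or ruled (Castelnuovo's rationality criterion and its ruled counterpart), or equivalently that $\kappa=-\infty$ is exactly the case in which $K_X$ fails to be pseudo-effective --- which is classical and may be quoted from \cite{KM98}. The one point requiring care is that $X$ itself need not be minimal, so the negative-degree nef class has to be produced on a minimal model and transported back to $X$; as indicated, the projection formula disposes of this immediately, so I do not anticipate a genuine obstacle.
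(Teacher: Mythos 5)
Your argument is correct. Note, however, that the paper does not actually prove this lemma: it records it as the well-known minimal model/abundance statement for surfaces and simply cites \cite{KM98, BCHM10}. What you have done is supply the standard proof behind that citation, by contraposition: pass to a minimal model, invoke the Enriques--Kodaira classification of minimal surfaces with $\kappa=-\infty$ ($\mathbb{P}^2$ or geometrically ruled), exhibit a nef class of negative canonical degree there ($-3$ for the line, $-2$ for a fibre), and pull it back via $K_X=\pi^*K_{X_0}+E$ with $E$ effective and $\pi$-exceptional, so that $K_X\cdot\pi^*N_0=K_{X_0}\cdot N_0<0$, contradicting pseudo-effectivity against a nef class. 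All steps check out, including the projection-formula point that disposes of the non-minimality of $X$; your alternative formulation via the two-dimensional MMP plus abundance is exactly the statement the paper quotes. The only caveat is that the substantive content (that $\kappa=-\infty$ forces the minimal model to be rational or ruled, equivalently that $K_X$ pseudo-effective implies $\kappa\ge0$) is itself the classical theorem being cited, so your write-up is best viewed as making the paper's citation self-contained rather than as a more elementary route; that said, it is a complete and correct justification given the classification as input.
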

\begin{lemma}\label{kappa<0}
Let $X$ be a smooth projective surface with $\rho(X)=2$.
 If $\kappa(X)=-\infty$, then $X$ satisfies  $\mathbf{Hyp(B)}$.
 In particular,  every ruled surface with either invariant $e>0$ or $e=0$ over a curve of genus $g\le1$ and one point blow-up of $\mathbb P^2$ satisfy  \cref{BH}.
\end{lemma}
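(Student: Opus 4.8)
The plan is to reduce the statement to \cref{Ruled} by first classifying the surfaces under consideration. Since $\kappa(X)=-\infty$, the surface $X$ is birationally ruled: contracting $(-1)$-curves produces a birational morphism $f\colon X\to X_{\min}$ onto a minimal model with $\kappa(X_{\min})=\kappa(X)=-\infty$, and by the classification of surfaces (cf. \cite{KM98}) the minimal surface $X_{\min}$ is either $\mathbb P^2$, with $\rho(X_{\min})=1$, or a geometrically ruled surface $\mathbb P(\mathcal E)\to B$ over a smooth curve $B$, with $\rho(X_{\min})=2$.

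Next I would use the hypothesis $\rho(X)=2$ to pin down $X$. If $f$ is an isomorphism, then $\rho(X_{\min})=2$, so $X=X_{\min}$ is a geometrically ruled surface over a curve, hence a ruled surface in the sense of \cite[\S V.2]{Hartshorne77}. If $f$ is not an isomorphism, then it factors through at least one blow-down of a $(-1)$-curve, so $\rho(X_{\min})\le\rho(X)-1=1$; hence $X_{\min}=\mathbb P^2$ and $X$ is the blow-up of $\mathbb P^2$ at a single point. That surface is the Hirzebruch surface $\mathbb F_1=\mathbb P(\mathcal O_{\mathbb P^1}\oplus\mathcal O_{\mathbb P^1}(1))$, which is again a ruled surface, with invariant $e=1$. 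In both cases $X$ is a ruled surface, so \cref{Ruled} applies and shows that $X$ satisfies $\mathbf{Hyp(B)}$.

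For the remaining (``in particular'') assertion, note that each listed surface falls under the corresponding clause of \cref{Ruled}: a ruled surface with $e>0$, and a ruled surface with $e=0$ over a curve of genus $g\le1$, are handled there directly, while the one point blow-up of $\mathbb P^2$ is $\mathbb F_1$, which has invariant $e=1>0$ and is therefore covered by the $e>0$ case. Hence all of them satisfy \cref{BH}.

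I do not anticipate a genuine obstacle here: the argument is essentially bookkeeping with the Enriques--Kodaira classification together with the blow-down structure of birational morphisms between surfaces, and the only subtlety worth flagging is the identification of the one point blow-up of $\mathbb P^2$ with the Hirzebruch surface $\mathbb F_1$, which is precisely what lets \cref{Ruled} absorb that case without a separate computation.
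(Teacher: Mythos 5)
Your argument is correct, and its skeleton is the same as the paper's: use $\kappa(X)=-\infty$ plus the classification of relatively minimal surfaces to conclude that, with $\rho(X)=2$, the surface $X$ is either geometrically ruled or the one point blow-up of $\mathbb P^2$, and then feed the ruled case into \cref{Ruled}. The only real divergence is how the blow-up of $\mathbb P^2$ is handled. You identify it with the Hirzebruch surface $\mathbb F_1=\mathbb P(\mathcal O_{\mathbb P^1}\oplus\mathcal O_{\mathbb P^1}(1))$, a ruled surface with invariant $e=1>0$, so that \cref{Ruled} absorbs it with no extra work; this is a legitimate and slightly slicker reduction (and since $g=0$, $q=0$, the $b<2g-2-e$ subcase of \cref{Ruled} never arises there). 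The paper instead treats this surface directly: writing $K_X=\pi^*(-3H)+E$ and $C=\pi^*(dH)-mE$ with $d\ge m$, it observes that every curve $C\ne E$ is nef, that $C-K_X$ is ample because $-K_X$ is ample, and concludes $h^1(\mathcal O_X(C))=0$ by Kodaira vanishing, obtaining \cref{BH} and then $\mathbf{Hyp(B)}$ via \cref{Rem1}(1). The paper's computation buys the stronger vanishing statement $h^1=0$ on this surface without invoking the $\mathbb F_1$ identification, while your route avoids any computation at the cost of relying on that identification; both are sound.
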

\begin{proof}
Let $S$ be a relatively minimal model of $X$.
A smooth projective surface $S$ is relatively minimal if it has no $(-1)$-rational curves.
By the Enrique-Kodaira classification of relatively minimal surfaces (cf. \cite{Hartshorne77, BHPV04, KM98}), it must be one of the following cases: a surface with nef canonical divisor, a ruled surface or $\mathbb P^2$.
Since $\kappa(X)=-\infty$, by   \cref{mmp}, $K_X$ is not nef.
Therefore, $S$ is either a ruled surface or $\mathbb P^2$.
As a result, $\rho(X)=2$ implies that $X$ is either a ruled surface or one point blow-up of $\mathbb P^2$.
By  \cref{Ruled}, every ruled surface satisfies $\mathbf{Hyp(B)}$.
In particular, every ruled surface with invariant $e>0$ and every ruled surface with $e=0$ over a curve of genus $g\le1$  satisfy  \cref{BH}.
Now suppose $\pi: X \rightarrow\mathbb P^2$ is one point blow-up of $\mathbb P^2$ with a exceptional curve $E$ and $\mathrm{Pic}(\mathbb P^2)=\bZ[H]$, where $H=\mathcal O_{\mathbb P^2}(1)$.
 Then  $K_X=\pi^{*}(-3H)+E$ and $C=\pi^*(dH)-mE$, where $m:=\mathrm{mult}_p(\pi_*C)$ and $C$ is a curve on $X$.
 Note that $d\ge m$ since $\pi_*C$ is a plane projective curve.
 Thus, every curve $C$ (not $E$) on $X$ has $C^2\ge0$ and then $C$ is nef.
 Since $-K_X$ is ample, $C-K_X$ is ample.
 Therefore, by Kadaira vanishing theorem, $h^1(\mathcal O_X(C))=0$. Therefore, $X$ satisfies \cref{BH}. By  \cref{Rem1}(1), $X$ satisfies $\mathbf{Hyp(B)}$.
\end{proof}
\begin{lemma}\label{>0}
Let $X$ be a smooth projective surface with $\rho(X)=2$. Then the following statements hold.
\begin{enumerate}
\item[(i)] $\NE(X)=\bbR_{\ge0}[f_1]+\bbR_{\ge0}[f_2]$, $f_1^2\le0, f_2^2\le0$ and $f_1\cdot f_2>0$. Here, $f_1, f_2$ are extremal rays.
\item[(ii)] If a curve $C$ has $C^2\le0$, then $C\equiv af_1$ or $C\equiv bf_2$ for some $a,b\in\mathbb R_{>0}$.
\item[(iii)] Suppose a divisor $D\equiv a_1f_1+a_2f_2$ with $a_1,a_2>0$ in (i).  Then $D$ is big. Moreover, if $D$ is a curve, then $D$ is nef and big and $D^2>0$.
\end{enumerate}
\end{lemma}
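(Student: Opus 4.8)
The plan is to reduce the whole statement to the convex geometry of the two-dimensional cone $\NE(X)\subseteq N^1(X)_\bbR\cong\bbR^2$ together with the Hodge index theorem, which for $\rho(X)=2$ says that the intersection form is non-degenerate of signature $(1,1)$.

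\emph{Part (i).} Since $X$ is projective, $\Nef(X)$ has non-empty interior (it contains the ample cone) and contains no line (if $D$ and $-D$ are both nef then $D\equiv 0$); dually, $\NE(X)$ is pointed and full-dimensional, hence spanned by exactly two linearly independent extremal rays $\bbR_{\ge0}[f_1]$ and $\bbR_{\ge0}[f_2]$ — which need not be represented by curves. Fix an ample $H$. I would first record that the positive cone $\mathcal P=\{v\in N^1(X)_\bbR : v^2>0,\ v\cdot H>0\}$, an open connected cone containing $[H]$, lies in $\operatorname{int}\NE(X)$: no point of $\mathcal P$ can lie on $\partial\NE(X)$, since a supporting form there is $D\cdot(-)$ for a nonzero nef class $D$ with $D\cdot v=0$, forcing $D^2\le0$ by Hodge index and $D^2\ge0$ by nefness, hence $D^2=0$ and a degenerate Gram matrix on the basis $\{v,D\}$ — impossible; as $\mathcal P$ is connected and meets $\NE(X)$, it is contained in $\operatorname{int}\NE(X)$. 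Then $f_i^2\le0$, for if $f_1^2>0$ then (since $f_1\in\NE(X)\setminus\{0\}$ forces $f_1\cdot H>0$) one gets $f_1\in\mathcal P\subseteq\operatorname{int}\NE(X)$, contradicting extremality. And $f_1\cdot f_2>0$, for otherwise $(af_1+bf_2)^2\le0$ for all $a,b\ge0$ (each term being $\le0$), so every class of $\NE(X)$ would have non-positive square, contradicting $[H]\in\mathcal P\cap\operatorname{int}\NE(X)$.

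\emph{Part (ii).} Let $C$ be a curve with $C^2\le0$, so $[C]=c_1f_1+c_2f_2$ with $c_1,c_2\ge0$, not both zero. Since on a surface $\NE(X)=\Eff(X)$, its interior is the big cone, and it therefore suffices to show $C$ is not big: then $[C]\in\partial\NE(X)$, which for our two-dimensional cone is the union of the two extremal rays, giving the claim. If $C^2<0$, any effective divisor linearly equivalent to $nC$ meets $C$ negatively, hence contains the irreducible curve $C$; peeling off copies of $C$ shows it must equal $nC$, so $h^0(\mathcal O_X(nC))=1$ for every $n\ge0$ and $C$ is not big. If $C^2=0$, then $C$ is nef (distinct irreducible curves meet non-negatively and $C\cdot C=0$), and a nef divisor is big iff its self-intersection is positive, so $C$ is again not big. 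Hence $[C]\equiv af_1$ or $[C]\equiv bf_2$ with $a,b>0$.

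\emph{Part (iii).} If $D\equiv a_1f_1+a_2f_2$ with $a_1,a_2>0$, then $[D]\in\operatorname{int}\NE(X)=\operatorname{int}\Eff(X)$, which is the big cone, so $D$ is big. If moreover $D$ is a curve, then $[D]$ is not proportional to $f_1$ or $f_2$, so (ii) forces $D^2>0$; and $D\cdot\Gamma\ge0$ for every irreducible curve $\Gamma\ne D$ together with $D\cdot D=D^2>0$ shows $D$ is nef. Thus $D$ is nef and big with $D^2>0$. The step I expect to be the crux is the bigness dichotomy in (ii): an interior class of $\NE(X)$ need not have positive square in general — for instance $10E+f$ (with $E$ the $(-1)$-curve and $f$ a ruling) on the Hirzebruch surface $\bbF_1$ lies in $\operatorname{int}\NE(\bbF_1)$ yet has square $-80$ — so one genuinely has to use that $C$ is a reduced irreducible curve, via the ``peel off copies of $C$'' argument, to push $[C]$ onto an extremal ray. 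Everything else is elementary convexity and the Hodge index theorem in Picard rank two.
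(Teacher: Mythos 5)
Your proof is correct, and the substance of the difference from the paper is self-containedness rather than strategy. The paper disposes of (i) and (ii) in one line by citing Lemma 1.22 of Koll\'ar--Mori, and gets bigness in (iii) from Lazarsfeld's Theorem 2.2.26 (the big cone is the interior of the pseudoeffective cone), then concludes nefness and $D^2>0$ for an interior curve class essentially as you do. You instead reprove the cited ingredients in rank two: the Hodge-index argument placing the positive cone inside the interior of $\NE(X)$ (whence $f_i^2\le0$ and $f_1\cdot f_2>0$), and, for (ii), the fact that an irreducible $C$ with $C^2\le0$ is never big --- via $h^0(\mathcal O_X(nC))=1$ when $C^2<0$ (peeling off copies of $C$) and via nef-with-zero-square when $C^2=0$ --- so that $[C]$ sits on the boundary of the two-dimensional cone, i.e.\ on one of the two extremal rays. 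You still use the same key fact as the paper, namely that the big cone is the interior of $\Psef(X)$, together with the surface-specific identification of $\NE(X)$ with the closed effective (divisor) cone, which the paper uses implicitly when it applies a divisor-cone theorem to the Mori cone. What your route buys is an explicit proof of the quoted Koll\'ar--Mori lemma and the pertinent observation (your $\mathbb F_1$ example) that irreducibility is genuinely needed in (ii), since interior classes can have very negative square; what the paper's route buys is brevity. One small wording point: in (i), ``$\mathcal P$ meets $\NE(X)$'' should be ``$\mathcal P$ meets the interior of $\NE(X)$'' (which the ample class supplies), as that is what the connectedness argument requires.
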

\begin{proof}
By \cite[Lemma 1.22]{KM98}, (i) and (ii) are clear since $\rho(X)=2$. For (iii), $D\equiv a_1f_1+a_2f_2$ with $a_1, a_2>0$ is an interior point of Mori cone, then by \cite[Theorem 2.2.26]{Lazarsfeld04}, $D$ is big. Moreover, if $D$ is a curve, then $D$ is nef. As a result, $D^2>0$.
\end{proof}
\begin{lemma}\label{nef}
Let $X$ be a smooth projective surface with $\rho(X)=2$. If $X$ has two negative curves $C_1$ and $C_2$, then the nef cone $\mathrm{Nef}(X)$ is
\begin{equation*}
 \mathrm{Nef}(X)=\bigg\{ a_1C_1+a_2C_2\bigg| a_1(C_1\cdot C_2)\ge a_2(-C_2^2), a_2(C_1\cdot C_2)\ge a_1(-C_1^2), a_1>0, a_2>0\bigg\}.
\end{equation*}
\end{lemma}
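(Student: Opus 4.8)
The plan is to identify $\Nef(X)$ with the dual of $\NE(X)$ under the intersection pairing and then to write this duality out explicitly in a suitable basis of $N^1(X)_{\bbR}$.

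First I would determine the Mori cone. By \cref{>0}(i), $\NE(X)=\bbR_{\ge 0}[f_1]+\bbR_{\ge 0}[f_2]$ with $f_i^2\le 0$ and $f_1\cdot f_2>0$; in particular $f_1,f_2$ form an $\bbR$-basis of $N^1(X)_{\bbR}$. Since $C_1^2<0$ and $C_2^2<0$, \cref{>0}(ii) forces each of $C_1,C_2$ to be a positive multiple of $f_1$ or of $f_2$. As $C_1$ and $C_2$ are distinct irreducible curves we have $C_1\cdot C_2\ge 0$, and if $C_1$ and $C_2$ were both proportional to the same $f_j$ then $C_1\cdot C_2$ would be a positive multiple of $f_j^2\le 0$, giving $C_1\cdot C_2=0$ and $f_j^2=0$, which contradicts $C_1^2<0$. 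Hence $[C_1]$ and $[C_2]$ span the two extremal rays of $\NE(X)$, so
\[
\NE(X)=\bbR_{\ge 0}[C_1]+\bbR_{\ge 0}[C_2],
\]
and $\{C_1,C_2\}$ is an $\bbR$-basis of $N^1(X)_{\bbR}$.

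Next, a class $D$ is nef exactly when $D\cdot Z\ge 0$ for every $Z\in\NE(X)$, i.e. when $D\cdot C_1\ge 0$ and $D\cdot C_2\ge 0$. Writing $D\equiv a_1C_1+a_2C_2$ and expanding,
\[
D\cdot C_1=a_1C_1^2+a_2(C_1\cdot C_2),\qquad D\cdot C_2=a_1(C_1\cdot C_2)+a_2C_2^2,
\]
so the two nef conditions are precisely $a_2(C_1\cdot C_2)\ge a_1(-C_1^2)$ and $a_1(C_1\cdot C_2)\ge a_2(-C_2^2)$, which are the two inequalities in the statement. It remains to justify the strict sign conditions $a_1>0$, $a_2>0$ for a nonzero nef $D$: writing $p=C_1\cdot C_2>0$, $u=-C_1^2>0$, $v=-C_2^2>0$, the inequalities $a_2p\ge a_1u$ and $a_1p\ge a_2v$ chain together (through $a_2v$, clearing the denominator $p$) to give $a_1(p^2-uv)\ge 0$, and the Hodge index theorem (the intersection form on $N^1(X)_{\bbR}$ has signature $(1,1)$, so the Gram determinant $C_1^2C_2^2-(C_1\cdot C_2)^2=uv-p^2$ is negative) yields $a_1\ge 0$; symmetrically $a_2\ge 0$. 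If $a_1=0$ then $D=a_2C_2$ with $a_2>0$, but $D\cdot C_2=a_2C_2^2<0$ contradicts nefness, so $a_1>0$, and likewise $a_2>0$. Conversely, any $D\equiv a_1C_1+a_2C_2$ with $a_1,a_2>0$ satisfying the two inequalities is nef by the displayed computation, which closes the argument.

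Apart from \cref{>0} and this single application of the Hodge index theorem, the proof is entirely linear algebra in a two-dimensional space, so I do not expect a real obstacle; the only point that needs a little care is the passage from the weak inequalities to the strict positivity of the coefficients $a_1,a_2$.
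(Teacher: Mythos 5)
Your proof is correct and follows essentially the same route as the paper: it identifies $\NE(X)=\bbR_{\ge0}[C_1]+\bbR_{\ge0}[C_2]$ via \cref{>0} and then dualizes, expanding $D\cdot C_1\ge0$ and $D\cdot C_2\ge0$ in the basis $\{C_1,C_2\}$. The only difference is that the paper gets the sign conditions implicitly by restricting to effective classes inside $\NE(X)$, whereas you deduce $a_1,a_2>0$ directly from the two inequalities via the Hodge index theorem --- a harmless (indeed slightly more careful) extra verification.
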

\begin{proof}
Since $\rho(X)=2$, $\NE(X)=\bbR_{\ge0}[C_1]+\bbR_{\ge0}[C_2]$  by \cref{>0}(ii).
As a result, an effective $\bR$- divisor $D\equiv a_1C_1+a_2C_2$ is nef if and only if $D\cdot C_1\ge0$ and $D\cdot C_2\ge0$, which imply the desired result.
\end{proof}
\begin{lemma}\label{twoneg}
Let $X$ be a smooth projective surface with $\rho(X)=2$. Suppose $X$ has two negative curves $C_1$ and $C_2$. Then $X$ satisfies  \cref{BH}.
\end{lemma}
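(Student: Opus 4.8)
The plan is to deduce the statement from \cref{Main}(1). First I collect the structural consequences of the hypothesis. Since $\rho(X)=2$ and $C_1,C_2$ are negative curves lying on distinct extremal rays, \cref{>0}(ii) gives $\NE(X)=\bbR_{\ge0}[C_1]+\bbR_{\ge0}[C_2]$, so $X$ satisfies $\mathbf{Hyp(A)}$; moreover any curve $D$ with $D^2\le0$ equals $C_1$ or $C_2$ (by \cref{>0}(ii) such a $D$ is numerically proportional to $C_1$ or $C_2$, and a curve numerically proportional to $C_1$ meets $C_1$ negatively, hence contains it, forcing $D=C_1$), so $X$ satisfies the BNC with $b(X)=\max\{-C_1^2,-C_2^2\}$, and by \cref{>0}(iii) every curve $D\neq C_1,C_2$ is nef and big with $D^2>0$. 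Finally, by the Hodge index theorem the intersection form on $\NS(X)_{\bbR}=\bbR C_1\oplus\bbR C_2$ has signature $(1,1)$, so $(C_1\cdot C_2)^2>C_1^2C_2^2$; as $C_1\cdot C_2\ge0$, this forces $C_1\cdot C_2>0$.

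Next I set up a one–parameter description of the nef curves. Write $K_X\num\kappa_1C_1+\kappa_2C_2$ with $\kappa_i\in\bbR$. By \cref{nef}, a class $a_1C_1+a_2C_2$ with $a_1,a_2>0$ is nef exactly when $t:=a_1/a_2$ lies in the closed interval $[t_-,t_+]$, where $t_-=\frac{-C_2^2}{C_1\cdot C_2}$ and $t_+=\frac{C_1\cdot C_2}{-C_1^2}$, and $t_-<t_+$ by the Hodge inequality above. For a curve $D\neq C_1,C_2$, write $D\num a_2(tC_1+C_2)$ with $a_2>0$ and $t\in[t_-,t_+]$; then $D^2=a_2^{2}\phi(t)$ and $K_X\cdot D=a_2\psi(t)$, where $\phi(t):=t^2C_1^2+2t(C_1\cdot C_2)+C_2^2$ and $\psi(t):=t(K_X\cdot C_1)+(K_X\cdot C_2)$. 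Since $D^2$ is a positive integer, $\max\{1,D^2\}=D^2$, hence $l_D=\psi(t)/\bigl(a_2\phi(t)\bigr)$. The crucial point is that $\phi$, a downward–opening quadratic in $t$, is positive at both endpoints: a direct computation gives $(t_-C_1+C_2)\cdot C_2=0$ and $(t_-C_1+C_2)\cdot C_1=\frac{(C_1\cdot C_2)^2-C_1^2C_2^2}{C_1\cdot C_2}>0$, so $\phi(t_-)=t_-\,(t_-C_1+C_2)\cdot C_1>0$, and symmetrically $\phi(t_+)>0$; therefore $\phi$ is positive, hence bounded below by a positive constant and above by some $\Delta$, on $[t_-,t_+]$, so $g:=\psi/\phi$ is bounded there, say $g(t)\le G$ with $G\ge0$ depending only on $X$.

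Now I verify the two remaining hypotheses of \cref{Main}(1). For the bound $l_C\le m(X)$: the numerical class of a curve is a nonzero vector of the lattice $\Num(X)$, which has a positive shortest–vector length $\lambda$ in the coordinates $(a_1,a_2)$; a nef curve $D\neq C_1,C_2$ has $t\in[t_-,t_+]$, hence $a_2\ge s_0:=\lambda/\sqrt{1+t_+^2}>0$, so $l_D=g(t)/a_2\le G/s_0$. Since $l_{C_1}=K_X\cdot C_1$ and $l_{C_2}=K_X\cdot C_2$ are fixed numbers, I fix a single positive constant $m(X)$ larger than each of $G/s_0$, $l_{C_1}$, $l_{C_2}$, $G^2\Delta$ and $1$, so that $l_C\le m(X)$ for every curve $C$ (in particular $\mathbf{Hyp(C)}$ holds). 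For the other hypothesis: if a curve $D$ has $D^2>0$ and $l_D>1$, then $D\neq C_1,C_2$ and $g(t)/a_2=l_D>1$ forces $a_2<g(t)\le G$, hence $D^2=a_2^{2}\phi(t)<G^2\Delta\le m(X)$; as $D$ is effective, $h^0(\mathcal O_X(D))\ge1$, so $D^2\le m(X)h^0(\mathcal O_X(D))$. With the BNC already established, \cref{Main}(1) applies and $X$ satisfies \cref{BH}.

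The main obstacle is the uniform positivity of $\phi$ on the \emph{closed} interval $[t_-,t_+]$ — equivalently, that the self–intersection of a nef class stays bounded away from $0$ as one approaches the two boundary rays spanned by $C_1$ and $C_2$. This is exactly where the Hodge index theorem enters, and it is what makes $g$ (hence $l_D$ up to the factor $1/a_2$) bounded. Once $t$ has been confined to a compact interval, bounding $a_2$ from below by lattice discreteness is routine, and everything else is bookkeeping feeding into \cref{Main}(1).
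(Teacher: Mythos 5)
Your argument is correct, and it reaches the conclusion along the same broad strategy as the paper (bound $l_D$ by bounding the coefficients of $D$ in the basis $C_1,C_2$ from below, then feed everything into \cref{Main}(1)), but the inputs differ in a way worth noting. The paper first invokes \cite[Claim 2.11]{Li19} to get $\kappa(X)\ge 0$, hence writes $K_X\num aC_1+bC_2$ with $a,b\ge 0$; it then bounds $a_1,a_2$ below using $D^2\ge 1$ together with the nef-cone inequalities of \cref{nef}, and finishes with a dichotomy: either $a_1>a$ and $a_2>b$, in which case $(K_X-D)\cdot D<0$ and Riemann--Roch gives the sharp direct bound $h^1(\mathcal O_X(D))\le q(X)h^0(\mathcal O_X(D))$ without any constants from \cref{Main}(1), or one coefficient is small, in which case $D^2$ is bounded and \cref{Main}(1) applies. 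You never use effectivity of $K_X$ (so you do not need the external result from \cite{Li19}): instead you confine the slope $t$ of a nef curve to the compact interval $[t_-,t_+]$, use the Hodge index theorem to show $\phi(t)=(tC_1+C_2)^2$ is positive at both endpoints (hence bounded away from $0$ on the interval), and get the lower bound on $a_2$ from discreteness of $\Num(X)$ rather than from $D\cdot C_i\ge 1$; the second hypothesis of \cref{Main}(1) is then verified directly from $l_D>1\Rightarrow a_2<G$, and you also check the BNC explicitly (which \cref{Main}(1) formally requires and the paper leaves implicit, it being trivial here). The trade-off: your route is self-contained and uniform, at the cost of less explicit constants; the paper's route produces concrete constants in terms of $a,b,C_i^2,C_1\cdot C_2$ and, in its first case, a cleaner bound, but leans on $\kappa(X)\ge 0$ from \cite{Li19}. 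Both proofs correctly note along the way that there are no curves of zero self-intersection, so $\mathbf{Hyp(B)}$ and $\mathbf{Hyp(C)}$ coincide in this situation.
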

\begin{proof}
Note that $\NE(X)=\bbR_{\ge0}[C_1]+\bbR_{\ge0}[C_2]$ by \cref{>0}(ii).
We first show that $X$ satisfies $\mathbf{Hyp(B)}$.
By \cite[Claim 2.11]{Li19}, $\kappa(X)\ge0$, i.e., there exists a positive integral number $m$ such that $h^0(X, \mathcal O_X(mK_X))\ge0$.
Therefore, $K_X$ is $\mathbb Q$-effective  divisor.
As a result, $K_X\equiv  aC_1+bC_2$ with $a, b\in\bbR_{\ge0}$.
Take a curve $D\equiv a_1C_1+a_2C_2$ with $a_1, a_2>0$, then by  \cref{>0}(iii), $D^2>0$.
As a result, $D\cdot C\ge0$ and $X$ has no any curves with zero self-intersection.
$D^2\ge1$ implies that either $D\cdot C_1\ge1$ and $D\cdot C_2\ge0$ or $D\cdot C_1\ge0$ and $D\cdot C_2\ge1$.
Without loss of generality, suppose that $D\cdot C_2\ge0$ and $D\cdot C_1\ge1$.
Then $a_1\ge (C_1^2+(C_1\cdot C_2)^2(-C_2^2)^{-1})^{-1}$. Here,  $C_1^2+(C_1\cdot C_2)^2(-C_2^2)^{-1}>0$ since $\rho(X)=2$. By symmetry and \cref{nef},
\begin{equation*}
                                               a_i\ge c:=\min\bigg\{(C_i^2+\frac{(C_1\cdot C_2)^2}{-C_j^2})^{-1}, \frac{-C_j^2}{(C_1\cdot C_2)}(C_i^2+\frac{(C_1\cdot C_2)^2}{-C_j^2})^{-1}\bigg\},
\end{equation*}
where $i\ne j\in\{1,2\}$. Therefore,
\begin{equation*}\begin{split}
                                                    l_D&=\frac{a(D\cdot C_1)+b(D\cdot C_2)}{a_1(D\cdot C_1)+a_2(D\cdot C_2)}\\&\le\max\bigg\{\frac{a}{c}, \frac{b}{c}\bigg\}.
\end{split}\end{equation*}
So $X$ satisfies $\mathbf{Hyp(B)}$.
 If $a_1>a$ and  $a_2>b$, then
 \begin{equation*}
 (K_X-D)D=(a-a_1)(D\cdot C_1)+(b-a_2)(D\cdot C_2)<0.
 \end{equation*}
This and Equation (\ref{eq1}) imply that
\begin{equation*}\begin{split}
                                             h^1(\mathcal O_X(D))&=h^0(\mathcal O_X(D))+h^2(\mathcal O_X(D))+\frac{(K_X\cdot D)-D^2}{2}-\chi(\mathcal O_X)\\&\le q(X)h^0(\mathcal O_X(D)).
\end{split}\end{equation*}
If $a_1\le a$ or  $a_2\le b$, then by \cref{nef}, $a_2\le a(C_1\cdot C_2)(-C_2^2)^{-1}$ or $a_1\le b(C_1\cdot C_2)(-C_1^2)^{-1}$.
As a result,
\begin{equation*}
D^2\le \max\bigg\{2a^2(C_1\cdot C_2)^2(-C_2^2)^{-1},2b^2(C_1\cdot C_2)^2(-C_1^2)^{-1}\bigg\}.
\end{equation*}
Therefore, $X$ satisfies \cref{BH}  by \cref{Main}(1).
\end{proof}
\begin{lemma}\label{kappa(X)=1}
Let $X$ be a smooth projective surface with $\rho(X)=2$.
If $\kappa(X)=1$ and $b(X)>0$, then $X$ satisfies  \cref{BH}.
\end{lemma}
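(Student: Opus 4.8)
The plan is to reduce the lemma to \cref{Main}(1): for such an $X$ I will verify that it satisfies the BNC, that there is a positive constant $m(X)$ with $l_C\le m(X)$ for every curve $C$, and that $D^2\le m(X)\,h^0(\mathcal O_X(D))$ whenever $D$ is a curve with $l_D>1$ and $D^2>0$. Everything hinges on describing $\NE(X)$, and the role of the hypotheses $\kappa(X)=1$ and $b(X)>0$ is precisely to force $\NE(X)$ to be spanned by $[K_X]$ and by the class of a single negative curve.

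First I would pin down the numerical geometry. Since $\kappa(X)=1$, a positive multiple of $K_X$ is effective, so $K_X\in\NE(X)=\Eff(X)$ and $K_X\not\equiv0$. I claim $X$ is minimal: otherwise, contracting a $(-1)$-curve produces a smooth projective surface $X_0$ with $\rho(X_0)=1$ and $\kappa(X_0)=1$; a surface of Picard number one is automatically minimal (a non-minimal surface has Picard number $\ge2$), so $K_{X_0}$ is nef with $K_{X_0}^2=0$ (as $\kappa(X_0)\ne2$), and then $\rho(X_0)=1$ forces $K_{X_0}\equiv0$, contradicting $\kappa(X_0)=1$. Hence $X$ is minimal, $K_X$ is nef, and $K_X^2=0$; in particular $K_X$ is not big, hence not interior to $\NE(X)$ by \cref{>0}(iii), so $[K_X]$ spans an extremal ray. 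By $b(X)>0$ there is a negative curve $C_1$, and by \cref{>0}(iii) its class also spans an extremal ray; it is not the ray through $K_X$ (where every class has self-intersection $0$), so $\NE(X)=\bbR_{\ge0}[K_X]+\bbR_{\ge0}[C_1]$. Moreover $C_1$ is the unique negative curve (two distinct ones would span the same ray and hence meet negatively), so the BNC holds.

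Next I would carry out the estimate in $\Num(X)$. Set $n:=-C_1^2>0$ and $e:=K_X\cdot C_1>0$ (positive since the two rays are distinct), and write a curve as $C\equiv\alpha K_X+\beta C_1$ with $\alpha,\beta\ge0$; then $K_X\cdot C=\beta e$ and $C^2=\beta(2\alpha e-\beta n)$. If $C=C_1$, then $l_{C_1}=e$; if $C^2=0$ and $C\ne C_1$, then $C\equiv\alpha K_X$ and $l_C=0$. If $C^2>0$, then $C$ is nef by \cref{>0}(iii), so $\alpha e\ge\beta n$, and $K_X\cdot C=\beta e$ is a positive integer, so $\beta\ge1/e$ and thus $\alpha e\ge\beta n\ge n/e$; hence $2\alpha e-\beta n\ge\alpha e\ge n/e$ and $l_C=\frac{e}{2\alpha e-\beta n}\le\frac{e^2}{n}$. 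This gives $l_C\le m(X):=\max\{e,\,e^2/n,\,1\}$ for every curve $C$. Finally, if a curve $D\equiv\alpha K_X+\beta C_1$ has $D^2>0$ and $l_D>1$, then $2\alpha e-\beta n<e$, while $D^2=\beta(2\alpha e-\beta n)\ge\beta^2 n$ (using $2\alpha e-\beta n\ge\alpha e\ge\beta n$); so $\beta<e/n$ and $D^2<e^2/n\le m(X)$, whence $D^2\le m(X)\,h^0(\mathcal O_X(D))$. All hypotheses of \cref{Main}(1) hold, so $X$ satisfies \cref{BH}.

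The hard part is the structural first step --- showing that $X$ is minimal with $K_X$ nef and $K_X^2=0$ --- because once the numerical class of $K_X$ is identified, the rest is elementary bookkeeping in the rank-two lattice $\Num(X)$, governed entirely by the convexity facts of \cref{>0}. One should also note that reducible or multiple fibres, or negative curves orthogonal to $K_X$, cannot occur in a harmful way: any curve $\Gamma$ with $K_X\cdot\Gamma=0$ lies on the $K_X$-ray and hence has $\Gamma^2=0$, so it is never negative.
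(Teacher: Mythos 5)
Your proof is correct, and its overall strategy coincides with the paper's: identify $\NE(X)$ as spanned by the ray through $K_X$ (equivalently, the fibre class) and the unique negative curve, bound $l_D$ on that cone, bound $D^2$ for the finitely constrained residual curves, and invoke \cref{Main}(1). The difference is in how the cone description is obtained and how self-contained the argument is. The paper gets nefness, $K_X^2=0$ and the second extremal ray geometrically: it quotes Beauville for the elliptic fibration $p\colon X\to B$, Iitaka's effective result to produce an actual fibre $F\equiv mK_X$, and \cite[Claim 2.14]{Li19} for the uniqueness of the negative curve; it then uses $(F\cdot D)\ge1$ for the key lower bound, and it treats the cases $D^2=0$ and $(K_X-D)\cdot D\le0$ by direct Riemann--Roch before falling back on \cref{Main}(1) for the remaining bounded-$D^2$ curves. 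You instead prove minimality directly (contracting a $(-1)$-curve would give $\rho=1$, $\kappa=1$, which is impossible), deduce $K_X$ nef, $K_X^2=0$, prove uniqueness of the negative curve from the two-ray structure, and replace the fibre-class argument by the integrality of $K_X\cdot C$ (giving $\beta\ge 1/e$), which lets you verify all hypotheses of \cref{Main}(1) in one pass without the fibration machinery or the external citation. Your route is more elementary and self-contained; the paper's route explains the geometric meaning of the $K_X$-ray (the Iitaka/elliptic fibration) and reuses it in \cref{Hyp(A)E} as the example with $\kappa(X,C_1)=1$. One cosmetic point: the extremality of the class of the negative curve follows from \cref{>0}(ii) (or the contrapositive of (iii)); your citation of (iii) there is slightly off, and the positivity $e=K_X\cdot C_1>0$ deserves the one-line Hodge-index justification (if $K_X\cdot C_1=0$ with $K_X^2=0$, nondegeneracy of the intersection form on the rank-two lattice would force $C_1$ proportional to $K_X$, contradicting $C_1^2<0$), but these are presentational, not gaps.
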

\begin{proof}
Since $\kappa(X)=1, \rho(X)=2$ and $\kappa(X)$ is a birational invariant, $K_X$ is nef and semi-ample.
By \cite[Proposition IX.2]{Beauville96}, we have $K_X^2=0$ and there is a surjective morphism $p: X\to B$ over a smooth curve $B$, whose general fibre $F$ is an elliptic curve.
Note that $X$ has exactly one negative curve $C$ by $b(X)>0$ and \cite[Claim 2.14]{Li19}.
In fact, $p$ is an Iitaka fibration of $X$.
In \cite{Iitaka70}, S. Iitaka proved that if $m$ is any natural number divisible by 12 and $m\ge86$, then $|mK_X|$ defines the Iitaka fibration.
Hence, there exists a curve $F$ as a general fiber of $p$ such that $F\equiv mK_X$.
Then by \cref{>0}(i)(ii), $\NE(X)=\bbR_{\ge0}[F]+\bbR_{\ge0}[C]$.
Note that $(F\cdot C)>0$ since $\rho(X)=2$.
Take a curve $D\equiv a_1F+a_2C$ with $a_1,a_2\ge0$.
 By  \cref{>0}(iii), $D^2>0$ if and only if $a_1,a_2>0$, $D^2=0$ if and only if $D\equiv a_1F$.
 Now suppose $D\equiv a_1F$. Then $l_D=0$.
 Note that $h^1(\mathcal O_X(D))\le q(X)h^0(\mathcal O_X(D))$ by Riemann-Roch theorem and Equation (\ref{eq1}).
 Now suppose $D^2>0$. Then $(F\cdot D)\ge1$ and $(C\cdot D)\ge0$, which imply that
\begin{equation}\label{eq6}
                                                 a_2\ge(F\cdot C)^{-1}, a_1\ge a_2(-C^2)(F\cdot C)^{-1}.
\end{equation}
Therefore, by Equation (\ref{eq6}),
\begin{equation*}\begin{split}
                        l_D&=\frac{(F\cdot D)}{m(a_1(F\cdot D)+a_2(C\cdot D))}\\&\le (F\cdot C)^2(-mC^2)^{-1}.
\end{split}\end{equation*}
Hence, $X$ satisfies $\mathbf{Hyp(C)}$.
If $ma_1\ge1$, then  $(K_X-D)D=(1-ma_1)(K_X\cdot D)-a_2(C\cdot D)\le0$.
As a result, $h^1(\mathcal O_X(D))=q(X)h^0(\mathcal O_X(C))$ by Riemann-Roch theorem and Equation (\ref{eq1}).
If $ma_1<1$, then by Equation (\ref{eq6}), $a_2<(F\cdot C)(-mC^2)^{-1}$. So $D^2<2m^{-2}(F\cdot C)^2(-C^2)^{-1}$. Hence, by   \cref{Main}(1), $X$ satisfies  \cref{BH}.
\end{proof}
\begin{lemma}\label{Hyp(A)}
Let $X$ be a smooth projective surface with $\rho(X)=2$.
Suppose $X$ satisfies  $\mathbf{Hyp(A)}$.
Then $X$ satisfies $\mathbf{Hyp(B)}$.
Moreover, if  $\NE(X)$ is generated by two curves with zero Iitaka dimension, then $X$ satisfies $\mathbf{Hyp(C)}$.
\end{lemma}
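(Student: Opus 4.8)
The plan is to combine \cref{>0} with the dichotomy $\kappa(X)=-\infty$ versus $\kappa(X)\ge 0$. If $\kappa(X)=-\infty$ then \cref{kappa<0} already gives $\mathbf{Hyp(B)}$, so assume $\kappa(X)\ge 0$; then a positive multiple of $K_X$ is effective, so $K_X\in\NE(X)$. Since $\rho(X)=2$ and $X$ satisfies $\mathbf{Hyp(A)}$, the closed Mori cone is $\NE(X)=\bbR_{\ge0}[C_1]+\bbR_{\ge0}[C_2]$ for two curves $C_1,C_2$, and identifying them with the extremal generators of \cref{>0}(i) we may assume $C_1^2\le 0$, $C_2^2\le 0$, $(C_1\cdot C_2)>0$ and $K_X\equiv aC_1+bC_2$ with $a,b\ge 0$. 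Curves $D$ with $D^2<0$ are quickly disposed of: by \cref{>0}(ii) such a $D$ is numerically proportional to some $C_i$ with $C_i^2<0$, so $(D\cdot C_i)<0$ forces $D=C_i$ since distinct prime divisors meet non-negatively; there are thus at most two such curves and $l_D$ is bounded on them.

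The substance is the case $D^2>0$. Write $D\equiv a_1C_1+a_2C_2$; as $D$ lies on neither extremal ray we have $a_1,a_2>0$, and being distinct from $C_1$ and $C_2$ it is nef (cf. \cref{>0}(iii)). Put $x:=(D\cdot C_1)\ge 0$ and $y:=(D\cdot C_2)\ge 0$, integers not both zero. Solving $x=a_1C_1^2+a_2(C_1\cdot C_2)$ and $y=a_1(C_1\cdot C_2)+a_2C_2^2$, and using $\Delta:=(C_1\cdot C_2)^2-C_1^2C_2^2>0$ (the Hodge index theorem, as $C_1,C_2$ span $\Num(X)_{\bbR}$) together with the fact that $x=0$ forces $C_1^2<0$ and $y=0$ forces $C_2^2<0$ — otherwise $D$ would be proportional to $C_1$ or to $C_2$, contradicting $D^2>0$, again by Hodge index — I would extract a uniform bound
\[
a_1,\ a_2\ \ge\ c(X)\ :=\ \frac{\min\{1,\,(C_1\cdot C_2)\}}{\Delta}\ >\ 0 .
\]
Since $K_X\cdot D=ax+by$ and $D^2=a_1x+a_2y$, this gives
\[
l_D=\frac{(K_X\cdot D)}{D^2}=\frac{a\,x+b\,y}{a_1\,x+a_2\,y}\ \le\ \frac{\max\{a,b\}\,(x+y)}{c(X)\,(x+y)}=\frac{\max\{a,b\}}{c(X)} ,
\]
and with the finitely many negative curves this proves $\mathbf{Hyp(B)}$ (in particular \cref{Main}(6)).

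For the final assertion I would upgrade this to $\mathbf{Hyp(C)}$ by bounding $l_D$ on the remaining curves, those with $D^2=0$; by \cref{>0}(ii) such a $D$ is numerically proportional to $C_1$ or $C_2$. The point is that when $\kappa(X,C_i)=0$ the set of curves numerically proportional to $C_i$ is finite: if $C_i^2<0$ it equals $\{C_i\}$ as above, while if $C_i^2=0$ any two such curves are disjoint from $C_i$ and from one another, so an infinite family of them would force $C_i$ to move, contradicting $\kappa(X,C_i)=0$, i.e. the boundedness of $h^0(\mathcal O_X(nC_i))$. Hence the curves with $D^2=0$ form a finite set, on which $l_D$ is bounded, and combined with $\mathbf{Hyp(B)}$ this yields $\mathbf{Hyp(C)}$.

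I expect the main obstacle to be the uniform lower bound $a_1,a_2\ge c(X)$ in the case $D^2>0$, that is, disposing cleanly of the degenerate sub-cases $x=0$ and $y=0$ through the Hodge index theorem; and, for the final assertion, making fully rigorous the claim that zero Iitaka dimension of a square-zero curve $C_i$ leaves only finitely many curves in the ray $\bbR_{\ge0}[C_i]$, where some care is needed when $q(X)>0$.
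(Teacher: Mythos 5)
Your proof of the $\mathbf{Hyp(B)}$ part is correct and follows the same route as the paper's: reduce to $\kappa(X)\ge 0$ via \cref{kappa<0}, write $K_X\equiv aC_1+bC_2$ with $a,b\ge 0$, decompose a curve $D\equiv a_1C_1+a_2C_2$, and turn the integrality of $x=D\cdot C_1$, $y=D\cdot C_2$ into a uniform positive lower bound on $a_1,a_2$; the finitely many negative curves (each necessarily equal to some $C_i$) are handled separately, and square-zero curves are irrelevant for $\mathbf{Hyp(B)}$. The one real difference is that you treat all sign cases at once by inverting the Gram matrix: with $\Delta=(C_1\cdot C_2)^2-C_1^2C_2^2>0$ one has $a_1=\big(y(C_1\cdot C_2)+x(-C_2^2)\big)/\Delta$ and symmetrically for $a_2$, and your Hodge-index observation that $y=0$ forces $C_2^2\le -1$ (and $x=0$ forces $C_1^2\le-1$) does make $c(X)=\min\{1,C_1\cdot C_2\}/\Delta$ a valid uniform bound, hence $l_D\le\max\{a,b\}/c(X)$. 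This is slightly cleaner than the paper, which splits into the cases $C_1^2,C_2^2<0$ (where it invokes \cref{twoneg} together with \cref{Rem1}(1)) and $C_1^2=0$ with $C_2^2=0$ or $C_2^2<0$; in particular your version does not need the appeal to \cite[Claim 2.11]{Li19} hidden in \cref{twoneg}, since $\kappa(X)\ge0$ is already supplied by the dichotomy.

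For the ``moreover'' part your argument is the same as the paper's and shares its delicate point. The paper simply asserts that $\kappa(X,C_1)=\kappa(X,C_2)=0$ leaves only $C_1,C_2$ as curves of zero self-intersection; you assert finiteness of the square-zero curves and honestly flag that ``infinitely many disjoint curves on the ray force $C_i$ to move'' is not automatic when $q(X)>0$, because a curve numerically proportional to $C_i$ need not contribute sections to $|nC_i|$: the discrepancy can be a non-torsion element of $\Pic^0(X)$. When $\Pic^\tau(X)$ is finite the claim does close up: if $D\ne C_i$ is a curve with $D\equiv\lambda C_i$, then suitable multiples $nD$ and $mC_i$ become linearly equivalent, so $h^0(\cO_X(mC_i))\ge 2$ and $\kappa(X,C_i)\ge 1$, a contradiction. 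So on this point your sketch is no less rigorous than the published proof; the residual issue for $q(X)>0$ is a gap in the paper's own argument as much as in yours, and it is worth stating the $q(X)=0$ (or finite $\Pic^\tau$) hypothesis explicitly if you want the ``moreover'' statement airtight.
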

\begin{proof}
 By \cref{kappa<0}, we can assume that $\kappa(X)\ge0$, i.e., there exists a positive integral number $m$ such that $h^0(X, \mathcal O_X(mK_X))\ge0$.
Therefore, $K_X$ is $\bbQ$-effective  divisor.
Since $X$ satisfies $\mathbf{Hyp(A)}$, by   \cref{>0}(ii)(i), $\NE(X)=\bbR_{\ge0}[C_1]+\bbR_{\ge0}[C_2]$. Here, $C_1$ and $C_2$ are two curves and $C_1^2, C_2^2\le0$.
As a result, $K_X\equiv aC_1+bC_2$ with $a,b\ge0$.

If $C_1^2<0$ and $C_2^2<0$, it follows from \cref{twoneg} and \cref{Rem1}(1).

Now suppose $C_1^2=0$. Then $X$ has at most one negative curve.
 By  \cref{>0}(iii),  $D^2>0$ if and only if $D\equiv a_1C_1+a_2C_2$ with $a_1, a_2>0$, $D^2>0$ if and only if $D$ is nef and big. As a result, $D^2>0$ implies that  $(D\cdot C_1)\ge1$, i.e., $a_2\ge(C_1\cdot C_2)^{-1}$. Now we divide the remaining proof into the following two cases.

Case (i).
 Suppose $C_2^2=0$. Then $X$ has no negative curve. If $D^2>0$, then $(D\cdot C_2)\ge1$, which implies that $a_1\ge(C_1\cdot C_2)^{-1}$. Therefore,
\begin{equation*}\begin{split}
                      l_D&=\frac{a(D\cdot C_1)+b(D\cdot C_2)}{a_1(D\cdot C_1)+a_2(D\cdot C_2)}\\&\le\mathrm{max}\bigg\{a(C_1\cdot C_2), b(C_1\cdot C_2)\bigg\}.
\end{split}\end{equation*}
Therefore, $X$ satisfies $\mathbf{Hyp(B)}$. Moreover, if $\kappa(X, C_1)=\kappa(X, C_2)=0$, then  $X$ has only two curves with zero self-intersection. Hence, $X$ satisfies $\mathbf{Hyp(C)}$.

Case~(ii).
Suppose $C_2^2<0$. $X$ has only one negative curve $C_2$. If $D^2>0$, then $(D\cdot C_2)\ge0$, which  implies that $a_1\ge a_2(-C_2^2)(C_1\cdot C_2)^{-1}$. Therefore,
\begin{equation*}\begin{split}
                      l_D&=\frac{a(D\cdot C_1)+b(D\cdot C_2)}{a_1(D\cdot C_1)+a_2(D\cdot C_2)}\\&\le\mathrm{max}\bigg\{a(-C_2^2)^{-1}(C_1\cdot C_2)^2, b(C_1\cdot C_2)\bigg\}.
\end{split}\end{equation*}
Therefore, $X$ satisfies $\mathbf{Hyp(B)}$. Moreover, if $\kappa(X, C_1)=0$, then $X$ has only one curve with zero self-intersection. Hence, $X$ satisfies $\mathbf{Hyp(C)}$.
\end{proof}
\begin{rmk}\label{Hyp(A)E}
For the examples of   \cref{Hyp(A)}, there exists a K3 surface with two negative curves (cf. \cite[Theorem 2]{Kovacs94}, \cite[Claim 2.12]{Li19}), \cref{kappa(X)=1} is an example of  $X$ with $\kappa(X, C_1)=1$ and $\kappa(X, C_2)=0$. The remaining case is that $\kappa(X)\ge0, C_1^2=0$ and $\kappa(X,C_1)=\kappa(X,C_2)=0$.
J. Ro\'e told us that there is an example for the case as follows.
\end{rmk}
\begin{example}\label{Roe}
(J. Ro\'e's example)
Let $\pi: X\to Y$ be one-point blow-up of a smooth projective surface $Y$ with $\rho(Y)=1$.
Then one extremal ray in $\overline{NE}(X)$ is the exceptional curve $E=C_2$.
Since $\rho(Y)=1$, the other extremal ray is determined by the Seshadri constant (cf. \cite[Definition 5.1.1]{Lazarsfeld04}).
Let $A$ be the ample generator of  $\mathrm{Pic}~(Y)$.
Because of the duality between $\mathrm{Nef}(X)$ and $\NE(X)$ (cf. \cite[Proposition 1.4.28]{Lazarsfeld04}), the class of $[C_1]$ is $\pi^*A-mE$, where $m=\frac{A^2}{e}$ and $e$ is the Seshadri constant of $A$ at the point that is blown-up.
The assumption $C_1^2=0$ implies that $m=e=\sqrt{A^2}$.
This does happen for the general smooth surfaces of degree $e^2$ in $\mathbb P^3$ (cf. \cite{Steffens98, Bauer97}).
The easiest case would be a general point $p$ on a general quartic surface $Y\subseteq\mathbb P^3$.
Blow it up. $A$ is the pullback to $Y$ of the hyperplane class in $\mathbb P^3$.
$C_1$ has class $\pi^*A-2E$, which is given by the strict transform of the nodal quartic curve obtained as intersection of the quartic surface with its tangent plane at $p$.
So $C_1$ is a plane nodal quartic, and it has genus $g\ge2$.
The restriction of $n(\pi^*A-2E)$ to $C_1$ has degree zero.
To prove that $\kappa(X,C_1)=0$, we need to check that $nC_1$ is the only section of $n(\pi^*A-2E)$. Note that the degree zero intersection divisor is not torsion in $\mathrm{Pic}~(C_1)$, which is true if $p$ and $Y$ are general. Assume that another section $D$ such that $D\sim_{\bbQ}nC_1+T$, where effective $\bbQ$-divisor $T$ is algebraically equivalent to zero. As a result, $T|_{C_1}=0$, a contradiction.
\end{example}
\begin{proof}[Proof~of~\cref{Main}(3)$\sim$(6)]
(3) follows from  \cref{kappa<0}.
 (4) follows from  \cref{twoneg}.
 (5) follows from  \cref{kappa(X)=1}.
 (6) follows from \cref{Hyp(A)}.
\end{proof}
We end by posing the following problem.
\begin{problem}
Classify all algebraic surfaces with $\mathbf{Hyp(B)}$.
\end{problem}
\begin{rmk}
For every smooth projective surface $X$, we conjecture that $X$ satisfies  $\mathbf{Hyp(B)}$ if $X$ satisfies  $\mathbf{Hyp(A)}$.
\end{rmk}
\section*{Acknowledgments}
He thanks Rong Du, Piotr Pokora, Joaquim Ro\'e, Hao Sun, De-Qi Zhang, Lei Zhang and Guolei Zhong for valuable conversations and the anonymous referee for several suggestions.



\bibliographystyle{amsalpha}
\bibliography{../mybib}

\providecommand{\bysame}{\leavevmode\hbox to3em{\hrulefill}\thinspace}
\providecommand{\MR}{\relax\ifhmode\unskip\space\fi MR }
\providecommand{\MRhref}[2]{%
  \href{http://www.ams.org/mathscinet-getitem?mr=#1}{#2}
}
\providecommand{\href}[2]{#2}
\begin{thebibliography}{BCHM10}

\bibitem{Bauer97}
T. Bauer,
\emph{Seshadri constants of quartic surfaces,}
Math. Ann. \textbf{309}(1997), 475-481.

\bibitem{B.etc.12}
T. Bauer, C. Bocci, S. Cooper, S. D. Rocci, M. Dumnicki, B. Harbourne, K. Jabbusch, A. L. Knutsen,
A. K$\mathfrak{\ddot{u}}$ronya, R. Miranda, J. Ro$\mathrm{\acute{e}}$, H. Schenck, T. Szemberg, and Z. Teithler,
\emph{Recent developments and open problems in linear series,}
In: Contributions to Algebraic Geometry, p. 93-140, EMS Ser. Congr. Rep., Eur. Math. Soc., Z\"{u}rich, 2012.

\bibitem{B.etc.13}
T. Bauer, B. Harbourne, A. L. Knutsen, A. K$\mathrm{\ddot{u}}$ronya, S. M$\mathrm{\ddot{u}}$ller-Stach, X. Roulleau, and T. Szemberg,
\emph{Negative curves on algebraic surfaces,}
Duke Math. J. \textbf{162}(10)(2013), 1877-1894.

\bibitem{Beauville96}
A. Beauville,
\emph{Complex algebraic surfaces,}
2ed., London Mathematical Society Student Texts, \textbf{34}, Cambridge University Press, Cambridge, 1996.

 \bibitem{BCHM10}
C. Birkar, P. Cascini, C. D. Hacon, and J. McKernan,
  \emph{Existence of minimal models for varieties of log general type},
  J. Amer. Math. Soc. \textbf{23} (2010), no.~2, 405--468.

\bibitem{BHPV04}
W. P. Barth, K. Hulek, C. A. M. Peters, and A. Van De Ven,
 \emph{Compact Complex Surfaces,} Ergeb. Math. Grenzgeb. Springer-Verlag, Berlin, 2004.

\bibitem{BPS17}
T. Bauer, P. Pokora, and D. Schmitz,
\emph{On the boundedness of the denominators in the Zariski decomposition on surfaces,} J. Reine Angew. Math. \textbf{733}(2017),251-259.

\bibitem{BZ16}
C. Birkar and D. Q. Zhang,
\emph{Effectively of Iitaka fibrations and pluricanonical systems of polarized pairs,}
Publ. Math. Inst. Hautes \'{E}tudes Sci. \textbf{123} (2016), 283-331.

\bibitem{C.etc.17}
C. Ciliberto, A. L. Knutsen, J. Lesieutre, V. Lozovanu, R. Miranda, Y. Mustopa, and D. Testa,
\emph{A few questions about curves on surfaces,} Rend. Circ. Mat. Palermo. II. Ser. \textbf{66} (2)(2017),195-204.

\bibitem{Fujita79}
T. Fujita,
\emph{On Zariski problem,}
Proc. Japan Acad. Ser. A \textbf{55}(1979), 106-110.

\bibitem{Hartshorne77}
R. Hartshorne,
\emph{Algebraic Geometry,} GTM \textbf{52}, Springer-Verlag, New York, 1977.

\bibitem{Iitaka70}
S. Iitaka,
\emph{Deformations of compact complex surfaces, II,}
J. Math. Soc. Soc. Japab, \textup{22}(1970), 247-261.

\bibitem{KM98}
J. Koll$\mathrm{\acute{a}}$r and S. Mori,
\emph{Birational geometry of algebraic varieties,}
 Cambridge Tracts in Mathematics, \textbf{134}, Cambridge University Press, Cambridge, 1998.

 \bibitem{Kovacs94}
 S. J. Kov$\mathrm{\acute{a}}$cs,
 \emph{The cone of curves of a K3 surface,}
 Math. Ann. \textup{300} (4)(1994),681-691.

\bibitem{Lazarsfeld04}
R. Lazarsfeld, \emph{Positivity in algebraic geometry, I}, Ergebnisse der
  Mathematik und ihrer Grenzgebiete. \textbf{48}, Springer-Verlag, Berlin, 2004.

\bibitem{Li19}
S. Li,
\emph{A note on a smooth projective surface with Picard number 2,}
Math. Nachr. \textbf{292} (2019), no. 12, 2637-2642.

 \bibitem{Steffens98}
 A. Steffens,
 \emph{Remarks on Seshadri constants,}
 Math. Z. \textbf{227}(1998),505-510.

 \bibitem{TX09}
 G. Todorov and C. Xu,
 \emph{Effectiveness of the log Iitaka fibration for 3-folds and 4-folds,}
 Algebra   Number Theory, \textbf{3} (2009), 697-710.

 \bibitem{Zariski62}
 O. Zariski,
 \emph{The Theory of Riemann-Roch for high multiples of an effective divisor on an algebraic surface,}
 Ann. Math. \textbf{76}(1962),560-615.

\end{thebibliography}

\end{document}